\DeclareMathAlphabet{\mathpzc}{OT1}{pzc}{m}{it}
\newcommand{\N}{{\ensuremath{\mathbb{N}}}}
\def\A{\mathcal A}
\def\B{\mathcal B}
\def\F{\mathcal F}
\def\I{\mathcal I}
\def\J{\mathcal J}
\def\K{\mathcal K}
\def\L{\mathcal L}
\def\N{\mathcal N}
\def\overF{\overline{\mathcal F}}
\newcommand{\norm}[1]{\left\lVert #1 \right\rVert}
\def\Ae{\mbox{\AE}}
\def\d{\displaystyle}
\def\Lip{{\rm Lip}}
\def\FIN{{\rm FIN}}
\def\MFIN{{\rm MFIN}}
\def\LIN{{\rm LIN}}
\def\LIP{{\rm LIP}}
\def\CLIN{{\mathfrak {A}}}
\def\CLIP{{\mathfrak{I}}}
\newtheorem{theorem}{Theorem}[section]
\newtheorem{lemma}[theorem]{Lemma}
\newtheorem{definition}[theorem]{Definition}
\newtheorem{corollary}[theorem]{Corollary}
\newtheorem{proposition}[theorem]{Proposition}
\newtheorem{problem}[theorem]{Problem}
\begin{document}
\title[Galois connections and minimal Lipschitz operator ideals]{Galois connection between Lipschitz and linear operator ideals and minimal Lipschitz operator ideals}

\author{Pablo Turco}
\address{IMAS - UBA - CONICET - Pab I,
 Facultad de Cs. Exactas y Naturales, Universidad de Buenos
Aires, (1428) Buenos Aires, Argentina}
\email{paturco@dm.uba.ar}
\author{Rom\'an Villafa\~ne}
\address{Departamento de Matem\'atica - Pab I,
 Facultad de Cs. Exactas y Naturales, Universidad de Buenos
Aires, (1428) Buenos Aires, Argentina}
\email{ rvillafa@dm.uba.ar}

\keywords{Lipschitz operator ideals, Banach operator ideals, Minimal operator ideals, Galois connection.}
\subjclass[2010] {47H99;46B28;47L20;47B10}

\thanks{CONICET PIP 2014--0483, ANPCyT PICT 2015--2299 }

\begin{abstract}
We establish a relation between Lipschitz operator ideals and linear operator ideals, which fits in the framework of Galois connection between lattices. We use this relationship to give a criterion which allow us to recognize when a Banach Lipschitz operator ideal is of composition type or not. Also, we introduce the concept of minimal Banach Lipschitz operator ideal, which have analogous properties to minimal Banach operator ideals. Also we characterize minimal Banach Lipschitz operator ideals which are of composition type and present examples which are not of this class.
\end{abstract}

\maketitle
\section{Introduction}

In 2003, Farmer and Johnson \cite{FaJo09} extend the notion of $p$-summing operators to the Lipschitz setting, introducing the concept of Lipschitz $p$-summing operators. The Lipschitz $p$-summing operators have similar properties respect the linear $p$-summing operators such as a (nonlinear) Pietsch factorization theorem \cite[Theorem~1]{FaJo09} and a extrapolation Theorem \cite[Theorem~2.2]{CheZhe11}. Also, the space of Lipschitz $p$-summing operators from a metric space to a dual Banach space is a dual Banach space \cite[Theorem~4.3]{Cha11}. The Lipschitz $p$-summing operators can be seen as the first of a large list of different classes of Lipschitz operators that had been studied in the last years. Most of these classes of Lipschitz operators are obtained as a generalization of linear operators, for instance the finite rank, approximable and compact operators \cite{JiSeVi14}, $p$-nuclear and $p$-integral operators \cite{CheZhe12}, among many others. The necessity to study this different classes in a general framework, unifying results and the language, leads to the new concept of Banach Lipschitz operator ideal defined in \cite[Definition~2.1]{ARSY16} and independently in \cite[Definition~2.3]{CCJV16} (under the name of {\it generic Lipschitz operator Banach ideal}).

From a Banach linear operator ideal there is a way to obtain a Banach Lipschitz operator ideal using the linearization of a Lipschitz map via the Arens-Ells space (see definition below). This Banach Lipschitz operator ideals are called {\it of composition type} and, in general, the properties of the Banach linear operator ideal can be transferred to the respective Banach Lipschitz operator ideal. This leads us to the first objective of our project: Determine whenever a Banach Lipschitz operator ideal is obtained from a linear operator ideal via a {\it  composition method}.

On the other hand, since linear operators are in particular Lipschitz operators, from a Banach Lipschitz operator ideal we may obtain a Banach linear operator ideal. In Section~\ref{Sec: Galois}, we show that this two procedures relates Lipschitz and linear operator ideals and fits in the framework of {\it Galois connection between lattices} (Theorem~\ref{Teo: Galois}). This relation between both types of ideals in the context of lattices let us to establish a criterion to identify if a Banach Lipschitz operator ideal is of composition type or not (Proposition~\ref{Prop: Criterion}). We apply the Criterion to give some examples of Banach Lipschitz operator ideals which are not of composition type (Porposition~\ref{Prop: No compo}).

Also, this point of view allow us to transfer some well known properties and particularities of the theory of Banach linear operators ideals into the Lipschitz setting. In Section \ref{Sec: minimal} we define the notion of \textit{minimal} Banach Lipschitz operator ideal, which as in the linear case, is related with the concept \textit{maximal hull} of a Banach Lipshcitz operator ideal, defined by Cabrera--Padilla, Ch\'avez--Dom\'{\i}nguez, Jim\'enez--Vargas and Villegas--Vallecillos \cite{CCJV16}. We characterize the minimal kernel and the maximal hull of Banach Lipschitz operator ideals which are of composition type (see Proposition~\ref{prop: Comp Maximal} and Theorem~\ref{Teo:min}). Finally, we give some properties of minimal Banach Lipschitz operator ideal and give an example in which the minimal kernel of a Banach Lipschitz operator ideal is not of composition type.

We refer the reader to the book of Weaver \cite{Wea99} for the basics of Lipschitz operators, to the book of Pietsch \cite{Pie80} for a general background of Banach linear operator ideals and to the book of Davey and Priestley \cite{DP02} for the theory of lattices.

\section{Notation and basics}

Along the manuscript, $X$ will be a metric space and $d$ its distance.  With $E$ and $F$ will denote Banach spaces with the norm $\norm{\cdot}_E$ and $\norm{\cdot}_F$ respectively. Whenever the space is understood, we will simply write $\norm{\cdot}$. The open unit ball of $E$ will be denote by $B_E$ and its dual space is $E'$. A pointed metric space is a metric space with a distinguished point, that we will always denote by $0$. In particular, a normed vector space is a pointed metric space and its distinguished point will be $0$. As usual, the space of all linear continuous, compact, approximable and finite rank  operators between $E$ and $F$ will be denoted by $\L(E;F), \K(E;F), \overF(E;F)$ and $\F(E;F)$ respectively. Also, for $1\leq p<\infty$, $\N_p$, $\I_p$ and $\Pi_p$ stands for the ideals of $p$-nuclear, $p$-integral and $p$-summing operators.

Recall that for two metric spaces $X$ and $Y$, a map $f\colon X\rightarrow Y$ is said to be a \textit{Lipschitz map} if there exists a constant $C>0$ such that $d(f(x);f(x'))\leq C d(x,x')$ for all $x, x' \in X$. The least of such constants will be denoted as $\Lip(f)$. We denote by $\Lip_0(X;Y)$ the set of all Lipschitz operators that vanish at $0$. If we consider a Banach space $E$, the space $\big(\Lip_0(X;E);\Lip(\cdot)\big)$ becomes a Banach space. In the case when $E=\mathds K$ ($\mathds K=\mathds R$ or $\mathds C$), we write $\Lip_0(X;\mathbb K)=X^\#$ and it will be called as the \textit{Lipschitz dual} of $X$. We refer to the book of Weaver \cite{Wea99} for more about this space.
Whenever $X$ is a complete pointed metric space, $X^\#$ is a dual Banach space \cite{AE56}. The predual of $X^\#$ it is called the Arens-Eells space (or the Lipschitz free space over $X$), and it will be denoted as $\Ae(X)$.  If we consider the Dirac map $\delta_X\colon X \rightarrow (X^\#)'$ defined as $\delta_X(x)(f)=f(x)$,
then we have the equality
$$
\Ae(X)=\overline{span\big\{\delta_X(x) \colon x \in X\big\}}\subset (X^\#)'.
$$

For $f\in\Lip_0(X;Y)$, there exists a (unique) linear operator $\widehat f\in\L(\Ae(X);\Ae(Y))$ such that $\delta_Y\circ f=\widehat f\circ \delta_X$ (see e.g. \cite[]{Wea99}). When $E$ is a Banach space, there is a linear quotient map $\beta_E \colon \Ae(E)\rightarrow E$, called the \textit{the barycenter}, which is the left inverse of $\delta_E$, meaning $\beta_E \circ \delta_E=Id_E$. There is an isometric isomorphism between $\Lip_0(X;E)$ and $\L(\Ae(X);E)$. Indeed, given $f\in \Lip_0(X;E)$, there exists a unique linear operator $L_f \in \L(\Ae(X);E)$ such that $f=L_f\circ \delta_X$ with $\norm{f}_{\Lip_0}=\norm{L_f}$ (see e.g. \cite[Proposition~2.2.4]{Wea99}). The linear operator $L_f$ is defined as $L_f=\beta_E\circ\widehat f$ ans satisfies $L_f(\delta_{X}(x))=f(x)$. We will refer the operator $L_f$ as \textit{the linearization} of $f$. Note that when $E$ and $F$ are Banach spaces then $\L(E;F) \subset \Lip_0(E;F)$ and if $T\in \L(E;F)$, the linerization of $T$ is $L_T=T\circ \beta_E$ \cite[Lemma~2.5]{GK03}.

By a \textit{Banach Lipschitz operator ideal} we mean a subclass $\I_{\Lip}$ of $\Lip_0$ such that for every pointed metric space $X$ and every Banach space $E$, the component
$$
\I_{\Lip}(X;E)=\Lip_0(X;E)\cap \I_{\Lip}
$$
satisfy:
\begin{enumerate}[\upshape (i)]
\item $\I_{\Lip}(X;E)$ is a linear subspace of $\Lip_0(X;E)$.
\item $Id_\mathds{K} \in \I_{\Lip}(\mathds{K};\mathds{K})$.
\item The ideal property: if $g \in \Lip_0(Y;X), f\in \I_{\Lip}(X;E)$ and $S \in \L(E;F)$, then the composition $S\circ f\circ g \in \I_{\Lip}(Y;F)$.
\end{enumerate}
Also there is the {\it Lipschitz ideal norm} over $\I_{\Lip}$, given by a function $\norm{\cdot}_{\I_{\Lip}}\colon \I_{\Lip}\rightarrow [0,+\infty)$ that satisfies

\begin{enumerate}[\upshape (i')]
\item For every pointed metric space $X$ and every Banach space $E$, the pair $\big(\I_{\Lip}(X;E);\norm{\cdot}_{\I_{\Lip}}\big)$ is a Banach space and $\Lip(f)\leq \norm{f}_{\I_{\Lip}}$ for all $f \in \I_{\Lip}(X;E)$.
\item $\norm{Id_{\mathds K} \colon \mathds K\rightarrow \mathds K}_{\I_{\Lip}}=1$
\item If $g \in \Lip_0(Y;X), f\in \I_{\Lip}(X;E)$ and $S \in \L(E;F)$, then $\norm{S\circ f\circ g}_{\I_{\Lip}} \leq \Lip(g) \norm{f}_{\I_{\Lip}} \norm{S}$.
\end{enumerate}

This definition was introduce in \cite[Definition~2.1]{ARSY16} and independently in \cite[Definition~2.3]{CCJV16}, under the name of {\it generic Lipschitz operator Banach ideal} and extends the definition of Banach linear operator ideals (see e.g. \cite{Pie80}).
Along the manuscript, to avoid confusion, we will denote $\I$, $\J$ for Banach Lipschitz operator ideals while $\A$, $\B$ stands for Banach linear operator ideals. With $\A\subset \B$  we mean that for all Banach spaces $E$ and $F$, $\A(E;F)\subset \B(E;F)$. The same applies with Lipschitz operator ideals.

All other relevant terminology and preliminaries are given in corresponding sections.

\section{Galois Conection between Linear and Lipschitz operator ideals}\label{Sec: Galois}

Following \cite[Section 3]{ARSY16}, given a Banach linear operator ideal $\A$, there is a method to produce a Banach Lipschitz operator ideal. Namely, for a pointed metric space $X$ and a Banach space $E$, a Lipschitz mapping $f$ from $X$ to $E$ belongs to $\A \circ \Lip_0(X;E)$ if there exist a Banach space $F$, a Lipschitz operator $g\in \Lip_0(X;F)$ and a linear operator $T \in \A(F;E)$ such that $f=T\circ g$. Equivalently, by \cite[Proposition~3.2]{ARSY16}, $f \in \A \circ \Lip_0(X;E)$ if and only if its linearization $L_f$ belongs to $\A(\Ae(X);E)$. The norm in $\A \circ \Lip_0$ is defined as,
$$
\norm{f}_{\A\circ \Lip_0}=\norm{L_f}_{\A}.
$$
The Banach Lipschitz operator ideals which are obtained in this way are called {\it of composition type}. For example the finite rank, approximable and compact linear operator ideals produce the finite rank, approximable and compact Banach Lipschitz operator ideals respectively, which were introduced in \cite{JiSeVi14} and denoted by $\Lip_{0\F}, \Lip_{0\overline\F}$ and $\Lip_{0\K}$. Also, for $1\leq p < \infty$ the Banach Lipschitz ideal of {\it strongly $p$-nuclear} operators, $\N^{SL}_p$, introduced in \cite{CheZhe12} and the Banach Lipschitz ideal of {\it strongly $p$-integral} operators, $\I^{SL}_p$, introduced in \cite{JiSeVi14} are obtained via the $p$-nuclear and $p$-integral linear operator ideals respectively \cite[Proposition~2.9 and Proposition~2.11]{Saa17}.

On the other hand, Banach spaces are pointed metric spaces (with distinguished point $0$) and linear operators are Lipschitz operators. This observation allow us to produce a Banach linear operator ideal from a Banach Lipschitz operator ideal, as it was noticed in \cite[Remark~2.9]{ARSY16}. If $\I$ is a Banach Lipschitz operator ideal, then the Banach linear operator ideal $\I \cap \L$ is defined, for Banach spaces $E$ and $F$ as
$$
\I \cap \L(E;F)=\Big\{T \in \L(E;F) \ \colon  \ T\in\I(E;F) \Big\}
$$
with the norm
$$
\norm{T}_{\I\cap \L}=\norm{T}_{\I}.
$$

Combining this two methods, we obtain a {\it procedure} (in the sense of Piestch) for Banach linear operator ideals.

\begin{proposition}\label{Prop: ban} Let $\A$ be a Banach linear operator ideal. Then
$$
\A \subset (\A\circ \Lip_0)\cap \L.
$$
Moreover, for any Banach spaces $E$ and $F$ and $T \in \A(E;F)$ we have
$$
\norm{T}_{(\A \circ \Lip_0)\cap \L}\leq \norm{T}_\A.
$$
\end{proposition}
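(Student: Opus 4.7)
The plan is a direct computation using the explicit form of the linearization of a linear map. Given $T \in \A(E;F)$, the goal is to show that $T$, regarded as a Lipschitz map, belongs to $\A\circ \Lip_0(E;F)$ with $\norm{T}_{\A\circ \Lip_0} \leq \norm{T}_{\A}$; the inclusion into $(\A\circ\Lip_0)\cap\L$ is then immediate since $T$ is already linear.

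First, I would invoke the characterization recalled just before the statement: $f \in \A\circ \Lip_0(X;E)$ if and only if the linearization $L_f \in \A(\Ae(X);E)$, and in that case $\norm{f}_{\A\circ \Lip_0}=\norm{L_f}_{\A}$. So everything reduces to analyzing $L_T$.

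Next, I would use the formula for the linearization of a linear operator stated in the preliminaries, namely $L_T = T\circ \beta_E$, where $\beta_E\colon \Ae(E)\to E$ is the barycenter map. Since $\beta_E$ is a (linear) quotient map with $\norm{\beta_E}\leq 1$, the ideal property of $\A$ (closure under composition with bounded linear operators from either side, together with the estimate $\norm{S\circ R\circ U}_{\A}\leq \norm{S}\,\norm{R}_{\A}\,\norm{U}$) immediately gives $T\circ \beta_E \in \A(\Ae(E);F)$ and
\[
\norm{L_T}_{\A}=\norm{T\circ \beta_E}_{\A}\leq \norm{T}_{\A}\,\norm{\beta_E}\leq \norm{T}_{\A}.
\]
Combining this with the characterization from the first step yields $T\in \A\circ\Lip_0(E;F)$ and $\norm{T}_{\A\circ\Lip_0}\leq \norm{T}_{\A}$, whence by the definition of the intersection ideal $\norm{T}_{(\A\circ\Lip_0)\cap\L}\leq \norm{T}_{\A}$.

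There is no real obstacle in this argument; the entire content is the identity $L_T=T\circ \beta_E$ together with $\norm{\beta_E}\leq 1$. The only thing worth double-checking is the norm bound on $\beta_E$, which follows from the fact that $\beta_E$ is a linear quotient onto $E$ with $\beta_E\circ \delta_E = \mathrm{Id}_E$ and $\norm{\delta_E}=1$.
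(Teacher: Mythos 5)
Your argument is correct and is essentially identical to the paper's own proof: both reduce the claim to the identity $L_T = T\circ\beta_E$, apply the ideal property of $\A$ with $\norm{\beta_E}\leq 1$ to get $\norm{L_T}_\A\leq\norm{T}_\A$, and conclude via the characterization of $\A\circ\Lip_0$ through linearizations. Nothing further is needed.
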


\begin{proof}
Take $E$ and $F$ Banach spaces and let $T \in \A(E;F)$. Since $T$ is linear, it is enough to show that its linearization belongs to $\A$. This follows since $T \circ \beta_E =L_T$.  Moreover,
$$
\norm{T}_{(\A \circ \Lip_0)\cap \L}=\norm{T}_{\A \circ \Lip_0}=\norm{L_T}_\A= \norm{T \circ \beta_E}_\A\leq \norm{T}_\A,
$$
and the proof finish.
\end{proof}

The equality in Proposition~\ref{Prop: ban} holds whenever we consider linear operators from Banach spaces which have the Lipschitz-lifting property defined by Godefroy and Kalton. Following \cite[Definition~2.7]{GK03} a Banach space $E$ has the \textit{Lipschitz-lifting property} if there exists a linear operator $U\colon E\rightarrow \Ae(E)$ such that $\beta_E\circ U=Id_E$. If the linear operator $U$ has norm one, then the Banach space $E$ has the \textit{isometric Lipschitz-lifting property}. For example, all separable Banach spaces and every Lipschitz free-space over any Banach space have the isometric Lipschitz-lifting property \cite[Lemma~2.10 and Theorem~3.1]{GK03}.

\begin{proposition}\label{Prop:lifting property}
Let $\A$ be a Banach linear operator ideal and $E$, $F$ be Banach spaces. If $E$ has the isometric Lipschitz-lifting property then
$$
\A(E;F) = (\A\circ \Lip_0)\cap \L(E;F) \quad isometrically.
$$
\end{proposition}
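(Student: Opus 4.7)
The plan is to combine Proposition~\ref{Prop: ban} with the defining property of the isometric Lipschitz-lifting. From Proposition~\ref{Prop: ban} we already have the inclusion $\A(E;F) \subset (\A\circ \Lip_0)\cap \L(E;F)$ together with the inequality $\norm{T}_{(\A\circ \Lip_0)\cap \L} \leq \norm{T}_\A$, so only the reverse inclusion and the reverse norm inequality remain.

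To prove the reverse inclusion, I would take $T \in (\A\circ \Lip_0)\cap \L(E;F)$. By definition this means $T$ is linear and $L_T = T\circ \beta_E \in \A(\Ae(E);F)$, with $\norm{L_T}_\A = \norm{T}_{(\A\circ \Lip_0)\cap \L}$. Using the isometric Lipschitz-lifting property of $E$, I would pick the linear map $U\colon E \to \Ae(E)$ with $\norm{U}=1$ and $\beta_E\circ U = Id_E$, and then factor
\[
T \;=\; T\circ Id_E \;=\; T\circ \beta_E \circ U \;=\; L_T \circ U.
\]
Since $L_T \in \A(\Ae(E);F)$ and $U \in \L(E;\Ae(E))$, the ideal property of $\A$ yields $T = L_T\circ U \in \A(E;F)$, with
\[
\norm{T}_\A \;\leq\; \norm{L_T}_\A\,\norm{U} \;=\; \norm{L_T}_\A \;=\; \norm{T}_{(\A\circ \Lip_0)\cap \L}.
\]
Combined with the inequality of Proposition~\ref{Prop: ban}, this gives the isometric equality.

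The argument is essentially a one-line factorization once the right objects are in place, so I do not anticipate any real obstacle; the only delicate point is making sure the identification $\norm{T}_{(\A\circ \Lip_0)\cap \L} = \norm{L_T}_\A$ is invoked correctly (this is exactly the norm that the composition ideal assigns to a Lipschitz map, restricted to the linear subideal). Observe that norm one of $U$ is crucial for the isometric statement; without it one would only obtain an equivalence of norms with constant $\norm{U}$.
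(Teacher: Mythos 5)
Your proposal is correct and follows exactly the paper's own argument: both factor $T = L_T\circ U$ using the lifting $U$ with $\beta_E\circ U = Id_E$, apply the ideal property of $\A$ to conclude $T\in\A(E;F)$ with $\norm{T}_\A\leq\norm{L_T}_\A$, and combine with Proposition~\ref{Prop: ban} for the isometry. No differences worth noting.
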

\begin{proof}
For $T\in \L(E;F)$, we have the equality $L_T=T\circ \beta_E$. Since $E$ has the isometric Lipschitz-lifting property, we have the equality $T=T\circ \beta_E \circ U=L_T \circ U$.
Thus, if $T\in (\A\circ \Lip_0)\cap \L(E;F)$, then $L_T \in \A(\Ae(E);F)$, which implies that $T\in \A(E;F)$ with $\|T\|_\A=\|L_T\circ U\|_\A\leq \|L_T\|_\A=\|T\|_{(\A\circ \Lip_0)\cap \L}$. The proof finish.
\end{proof}

The particular case of the above proposition when $\A$ is the ideal of $p$-nuclear operators covers \cite[Theorem~2.3]{CheZhe12}.

Also, for some Banach linear operator ideals, the equality in Proposition~\ref{Prop: ban} holds in general, regarding the Banach spaces we consider. This is the case when the Banach linear operator ideal is maximal or surjective. Following \cite[17.2]{DF93}, we may consider the maximal hull of a Banach linear operator ideal $\A$, $\A^{max}$, as the {\it biggest} Banach linear operator ideal which coincide with $\A$ over finite dimensional spaces. Moreover, $\A$ is said to be maximal if $\A=\A^{max}$ isometrically. For instance, the ideals of $p$-summing operators and $p$-integral operators are maximal, $1\leq p<\infty$ \cite[19.1.1 and 23.1.3]{Pie80}.

\begin{proposition} Let $\A$ be a maximal Banach linear operator ideal. Then
$$
\A = (\A\circ \Lip_0)\cap \L \quad isometrically.
$$
\end{proposition}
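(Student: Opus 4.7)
The plan is to combine Proposition~\ref{Prop: ban}, which already yields the inclusion $\A\subset (\A\circ \Lip_0)\cap\L$ together with $\|T\|_{(\A\circ \Lip_0)\cap\L}\leq \|T\|_{\A}$, with the finite-dimensional characterization of maximal operator ideals from \cite[17.2]{DF93}. That characterization states that for any Banach linear operator ideal $\B$ and any $T\in\L(E;F)$,
\[
\|T\|_{\B^{max}} = \sup\bigl\{\|Q_L\, T\, J_N\|_{\B} : N\subset E \text{ finite-dimensional},\ L\subset F \text{ finite-codimensional}\bigr\},
\]
where $J_N\colon N\hookrightarrow E$ and $Q_L\colon F\twoheadrightarrow F/L$ are the canonical inclusion and quotient. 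Since by hypothesis $\A=\A^{max}$ isometrically, it suffices to show that for $T\in(\A\circ \Lip_0)\cap\L(E;F)$ the norms $\|Q_L T J_N\|_{\A}$ are all bounded by $\|L_T\|_{\A} = \|T\|_{(\A\circ \Lip_0)\cap\L}$.

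Fixing such a $T$, a finite-dimensional $N\subset E$ and a finite-codimensional $L\subset F$, I would consider $g := Q_L\circ T\circ J_N\in\Lip_0(N;F/L)$. Using the functoriality of $\Ae(\cdot)$---specifically that the Lipschitz map $J_N$ induces a linear $\widehat{J_N}\colon \Ae(N)\to \Ae(E)$ with $\delta_E\circ J_N=\widehat{J_N}\circ\delta_N$ and $\|\widehat{J_N}\|=\Lip(J_N)\leq 1$---I would identify the linearization of $g$ as
\[
L_g = Q_L\circ L_T\circ \widehat{J_N}\colon \Ae(N)\longrightarrow F/L.
\]
Both sides are linear and they agree on $\delta_N(N)$, since $Q_L\circ L_T\circ \widehat{J_N}\circ\delta_N = Q_L\circ L_T\circ \delta_E\circ J_N = Q_L\circ T\circ J_N = g$; density of the linear span of $\delta_N(N)$ in $\Ae(N)$ finishes the verification.

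The ideal property of $\A$ applied to this linear factorization, together with $\|Q_L\|\leq 1$ and $\|\widehat{J_N}\|\leq 1$, then yields $L_g\in\A(\Ae(N);F/L)$ with $\|L_g\|_{\A}\leq \|L_T\|_{\A}$. Because $N$ is finite-dimensional it is separable and therefore enjoys the isometric Lipschitz-lifting property, so Proposition~\ref{Prop:lifting property} applied to the linear operator $g$ gives $g\in\A(N;F/L)$ with $\|g\|_{\A}=\|L_g\|_{\A}\leq \|L_T\|_{\A}$. Taking the supremum over $N$ and $L$ and invoking maximality, $\|T\|_{\A}=\|T\|_{\A^{max}}\leq \|L_T\|_{\A}=\|T\|_{(\A\circ \Lip_0)\cap\L}$, which combined with Proposition~\ref{Prop: ban} delivers the isometric equality. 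The only point requiring genuine care is the identity $L_g=Q_L\circ L_T\circ \widehat{J_N}$, which is routine from the universal property of the Arens--Eells space; otherwise the argument is a transparent assembly of maximality and the lifting property of finite-dimensional spaces.
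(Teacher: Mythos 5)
Your argument is correct and follows essentially the same route as the paper: one inclusion from Proposition~\ref{Prop: ban}, and the reverse inclusion by reducing to finite-dimensional spaces via maximality and then invoking the isometric Lipschitz-lifting property of finite-dimensional spaces through Proposition~\ref{Prop:lifting property}. The only difference is that you unpack the citation to \cite[Remark 17.2]{DF93} into an explicit estimate of the compressions $Q_L T J_N$ (via the identity $L_{Q_L T J_N}=Q_L\circ L_T\circ \widehat{J_N}$), whereas the paper simply notes that the two ideals agree on finite-dimensional spaces and cites that remark.
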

\begin{proof}
Since every finite dimensional space has the isometric Lipschitz-lifting property, by Proposition~\ref{Prop:lifting property}, we have the isometric equality $\A(M;N) = (\A\circ \Lip_0)\cap \L(M;N)$ for every finite dimensional spaces $M$ and $N$. As $\A$ being maximal, by \cite[Remark 17.2]{DF93} we have the continuous inclusion $(\A\circ \Lip_0)\cap \L \subset \A$. The other inclusion always holds by Proposition~\ref{Prop: ban}.
\end{proof}

 Recall that a Banach operator ideal $\A$ is \textit{surjective} if for every Banach spaces $E$ and $F$, a linear operator $T\in \L(E;F)$ belongs to $\A(E;F)$ whenever exist a Banach space $G$ and a linear operator $R\in \A(G;F)$ such that $T(B_E)\subset R(B_G)$. Moreover, $\|T\|_{\A}=\inf\{\|R\|_\A \colon R\in \A(G;F), \ T(B_E)\subset R(B_G)\}$. For instance, the ideal of compact operators and weakly compact operators are surjective ideals.

\begin{proposition}
  Let $\A$ be a surjective Banach linear operator ideal. Then
$$
\A = (\A\circ \Lip_0)\cap \L \quad isometrically.
$$
\end{proposition}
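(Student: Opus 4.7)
The plan is to prove the nontrivial inclusion $(\A\circ\Lip_0)\cap\L(E;F)\subset \A(E;F)$ together with the inequality $\|T\|_\A\leq\|T\|_{(\A\circ\Lip_0)\cap\L}$, since the reverse direction is already provided by Proposition~\ref{Prop: ban}. So I would fix Banach spaces $E,F$ and a linear operator $T\in (\A\circ\Lip_0)\cap\L(E;F)$. By definition of the composition ideal, this means that $L_T=T\circ\beta_E\in\A(\Ae(E);F)$ and $\|L_T\|_\A=\|T\|_{(\A\circ\Lip_0)\cap\L}$.

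The key observation is that the barycenter $\beta_E\colon\Ae(E)\to E$ is a linear quotient map, so $\beta_E(B_{\Ae(E)})=B_E$. Consequently,
$$
T(B_E)=T\bigl(\beta_E(B_{\Ae(E)})\bigr)=L_T(B_{\Ae(E)}).
$$
Taking $G=\Ae(E)$ and $R=L_T\in\A(G;F)$, the surjectivity of $\A$ directly yields $T\in\A(E;F)$ together with $\|T\|_\A\leq \|L_T\|_\A=\|T\|_{(\A\circ\Lip_0)\cap\L}$, which is precisely the missing inequality.

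In spirit the argument is the same as in Proposition~\ref{Prop:lifting property}: the surjectivity of $\A$ plays the role that the isometric Lipschitz-lifting property played there, the difference being that here we only need a set-theoretic factorization of unit balls $T(B_E)\subset R(B_G)$ rather than a genuine linear lifting. The only point to verify carefully is that $\beta_E$ really maps the open unit ball of $\Ae(E)$ onto the open unit ball of $E$, so that the inclusion $T(B_E)\subset L_T(B_{\Ae(E)})$ holds with the constant~$1$ needed for the isometric statement; this is immediate from the fact that $\beta_E$ is a metric quotient, so no serious obstacle appears.
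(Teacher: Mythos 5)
Your proof is correct and follows essentially the same route as the paper: both reduce the nontrivial inclusion to the equality $T(B_E)=L_T(B_{\Ae(E)})$ and then invoke surjectivity of $\A$ with $G=\Ae(E)$, $R=L_T$. The only difference is that the paper cites the proof of \cite[Proposition~2.1]{JiSeVi14} for that equality of balls, whereas you derive it directly from $L_T=T\circ\beta_E$ and the fact that the barycenter is a metric quotient; this is a legitimate, self-contained justification of the same step.
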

\begin{proof}
Fix $E$ and $F$ Banach spaces and take $T\in (\A\circ \Lip_0)\cap \L(E;F)$. Then, $L_T \in \A(\Ae(E);F)$. From the proof of \cite[Proposition~2.1]{JiSeVi14} we may deduce the equality $L_T(B_{\Ae(E)})=T(B_E)$. Since $\A$ is surjective, then $T\in \A(E;F)$ and $\|T\|_\A\leq \|L_T\|_\A=\|T\|_{(\A\circ \Lip_0)\cap \L}$.
\end{proof}

The equality in Proposition~\ref{Prop: ban} does not holds in general. At the light of Proposition~\ref{Prop:lifting property}, to construct a counterexample we will appeal to Banach spaces with the Lipschitz lifting property.

\begin{definition} Let $E$ and $F$ be Banach spaces and $T\in \L(E;F)$. Then $T$ is called LLP-factorable if there exists a Banach space $G$ with the isometric Lipschitz lifting property and operators $S\in \L(E;G)$ and $R\in \L(G;F)$ such that $T=R\circ S$. The class of all LLP-factorable operators is denoted by $\mathcal{LP}$.
\end{definition}

We think that the next lemma is well known. However, we did not find it in the literature.
\begin{lemma}\label{Lemma: Sum Llp}
Let $(E_i)_{i \in \mathds N}$ a sequence of Banach spaces with the isometric Lipschitz-lifting property. Then the Banach space
$$
E=\{e=(e_1,e_2,\ldots) \colon e_i \in E_i \quad \|e\|_E=\sum_{i=1}^{\infty}\|e_i\|_{E_i}<\infty\}
$$
endowed with the norm $\|\cdot\|_E$ has the isometric Lipschitz-lifting property.
\end{lemma}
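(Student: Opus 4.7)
The plan is to build the lifting $U \colon E \to \Ae(E)$ coordinate by coordinate, using the given liftings $U_i \colon E_i \to \Ae(E_i)$ together with the canonical isometric inclusions $\iota_i \colon E_i \to E$ (sending $e_i$ to the sequence with $e_i$ in the $i$-th place and zeros elsewhere).

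For each $i$, since $\iota_i$ is a Lipschitz operator vanishing at $0$ with $\Lip(\iota_i)=1$, it induces a linear operator $\widehat{\iota_i} \colon \Ae(E_i) \to \Ae(E)$ with $\|\widehat{\iota_i}\|\leq 1$ satisfying $\widehat{\iota_i}\circ \delta_{E_i}=\delta_E\circ \iota_i$. Given $e=(e_1,e_2,\ldots) \in E$, I would then define
$$
U(e)=\sum_{i=1}^{\infty}\widehat{\iota_i}\bigl(U_i(e_i)\bigr).
$$
The series converges absolutely in $\Ae(E)$ because
$$
\sum_{i=1}^{\infty}\bigl\|\widehat{\iota_i}(U_i(e_i))\bigr\|_{\Ae(E)} \leq \sum_{i=1}^{\infty}\|U_i(e_i)\|_{\Ae(E_i)} \leq \sum_{i=1}^{\infty}\|e_i\|_{E_i}=\|e\|_E,
$$
where I use $\|\widehat{\iota_i}\|\leq 1$ and $\|U_i\|\leq 1$. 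This immediately gives that $U$ is well-defined with $\|U\|\leq 1$, and linearity of $U$ follows from linearity of each $\widehat{\iota_i}\circ U_i$ together with the uniform domination by $\|\cdot\|_E$.

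The key verification is that $\beta_E \circ U = \mathrm{Id}_E$. Here I would exploit the fact (recorded in the paper's preliminaries) that for linear operators the linearization has the form $L_{\iota_i}=\iota_i\circ \beta_{E_i}$; combining this with the identity $\beta_E\circ \widehat{\iota_i}=L_{\iota_i}$ (which follows from $\beta_E\circ \widehat{\iota_i}\circ \delta_{E_i}=\beta_E\circ \delta_E\circ \iota_i=\iota_i=L_{\iota_i}\circ \delta_{E_i}$ together with the density of $\delta_{E_i}(E_i)$ in $\Ae(E_i)$ and continuity), one obtains
$$
\beta_E\bigl(\widehat{\iota_i}(U_i(e_i))\bigr)=\iota_i\bigl(\beta_{E_i}(U_i(e_i))\bigr)=\iota_i(e_i),
$$
using the hypothesis $\beta_{E_i}\circ U_i=\mathrm{Id}_{E_i}$. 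Applying the bounded operator $\beta_E$ to the convergent series then yields
$$
\beta_E(U(e))=\sum_{i=1}^{\infty}\iota_i(e_i)=e.
$$
Together with $\|U\|\leq 1$, this gives that $E$ has the isometric Lipschitz-lifting property.

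The only mildly subtle step is the identity $\beta_E\circ \widehat{\iota_i}=L_{\iota_i}$, i.e.\ the compatibility of barycenter and Lipschitz linearization under post-composition with the inclusion $\iota_i$. Everything else is a clean absolute convergence estimate in the $\ell_1$-sum. I would therefore devote most of the write-up to justifying this compatibility cleanly, and dispatch the norm and convergence arguments briefly.
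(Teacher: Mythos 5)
Your proof is correct and constructs exactly the same lifting as the paper: the paper's operator $\xi\circ T$ coincides with your $U=\sum_{i}\widehat{\iota_i}\circ U_i\circ P_i$ (where $P_i$ is the coordinate projection), since the paper's $\xi$ restricted to the $i$-th summand of the $\ell_1$-sum of the $\Ae(E_i)$ is precisely $\widehat{\iota_i}$. The only difference is in how the identity $\beta_E\circ U=Id_E$ is checked: you use the naturality relation $\beta_E\circ\widehat{\iota_i}=\iota_i\circ\beta_{E_i}$ established by density of the Dirac span, while the paper expands each $U_i(e_i)$ as an absolutely convergent series of point masses and pushes the computation through term by term; your verification is the cleaner of the two.
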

\begin{proof}
Consider the Banach space
$$
U=\{u=(u_1,u_2,\ldots) \colon u_i \in \Ae(E_i) \quad \|u\|_U=\sum_{i=1}^{\infty}\|u_i\|_{\Ae(E_i)}<\infty\}
$$
endowed with the norm $\|\cdot\|_U$. Note that
$$
U=\overline{span\{(0,\ldots,0,\delta_{E_i}(e_i),0,\ldots)\colon e_i \in E_i, i \in \mathds N\}}^{\|\cdot\|_U}.
$$
Since every Banach space $E_i$, for $i\in\mathds{N}$, has the isometric lifting Lipschitz property, there exist linear operators $T_i\colon E_i\rightarrow \Ae(E_i)$ such that $\beta_{E_i}\circ T_i=Id_{E_i}$. For every $i \in \mathds N$ and $e_i \in E_i $ there exist sequences $(\widetilde{e}_{i,j})_j \subset E_i$ and $(\alpha_{i,j})_j\subset\mathds{K}$ with $\sum_{j=1}^{\infty} |\alpha_{i,j}| \|\widetilde{e}_{i,j}\|_{E_i}<\infty$ such that $T_i(e_i)=\sum_{j=1}^{\infty} \alpha_{i,j} \delta_{E_i}(\widetilde{e}_{i,j})$. Note that
$$e_i=\beta_{E_i}\circ T_i(e_i)=\beta_{E_i} \left(\sum_{j=1}^{\infty} \alpha_{i,j} \delta_{E_i}(\widetilde{e}_{i,j})\right)=\sum_{j=1}^{\infty} \alpha_{i,j} \widetilde{e}_{i,j}
$$
Define the linear operators
$T\colon E\rightarrow U$ and $\xi\colon U \rightarrow \Ae(E)$ as
$$
T(e_1,e_2,\ldots)=(T_1e_1,T_2e_2,\ldots)
$$
and
$$
\xi(0,\ldots,0,\delta_{E_i}(e_i),0,\ldots)=\delta_E(0,\ldots,0,e_i,0,\ldots),
$$
both extended by linearity and continuity.

Is clear that $\|\xi \circ T\|=1$. Finally, we show that $\beta_E\circ(\xi\circ T)=Id_E$. Indeed,
$$
\begin{array}{rl}
\d\beta_E\circ \xi \circ T(e_1,e_2,\ldots)=&\d\beta_E\circ \xi \left(T_1(e_1),T_2(e_2),\ldots\right)\\
=&\d\sum_{i=1}^{\infty}\beta_E\circ \xi (0,\ldots,0,T_i(e_i),0,\ldots)\\
=&\d\sum_{i=1}^{\infty}\beta_E\circ \xi\left(\sum_{j=1}^{\infty} (0,\ldots,0,\alpha_{i,j} \delta_{E_i}(\widetilde{e}_{i,j}),0,\ldots)\right)\\
=&\d\sum_{i=1}^{\infty}\sum_{j=1}^{\infty} \alpha_{i,j}\beta_E\circ \xi(0,\ldots,0, \delta_{E_i}(\widetilde{e}_{i,j}),0,\ldots)\\
=&\d\sum_{i=1}^{\infty}\sum_{j=1}^{\infty} \alpha_{i,j}\beta_E\circ \delta_E(0,\ldots,0, \widetilde{e}_{i,j},0,\ldots)\\
=&\d\sum_{i=1}^{\infty}\sum_{j=1}^{\infty} \alpha_{i,j} (0,\ldots,0, \widetilde{e}_{i,j},0,\ldots)\\
=&\d\sum_{i=1}^{\infty}(0,\ldots,0, e_i,0,\ldots)\\
=&\d(e_1,e_2,\ldots)
\end{array}
$$
\end{proof}
The above lemma is the key to show that $\mathcal{LP}$ is a Banach linear operator ideal. The proof is  straightforward and we will omit it.

\begin{theorem} The class $\mathcal {LP}$ endowed with the ideal norm defined, for every Banach spaces $E$ and $F$ and $T\in \mathcal {LP}(E,F)$, as
$$
\|T\|_{\mathcal {LP}}=\inf\{\|R\| \|S\|\}
$$
where the infimum is taken over all the factorization of $T=R\circ S$ trough a Banach space with the Lipschitz-lifting property, is a Banach linear operator ideal.
\end{theorem}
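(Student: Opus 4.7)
The plan is to verify the Pietsch axioms for a Banach linear operator ideal directly, using Lemma~\ref{Lemma: Sum Llp} as the structural workhorse. The ideal property and the identity axiom are immediate. If $T = R\circ S$ factors through a Banach space $G$ with the isometric Lipschitz-lifting property, then for arbitrary $A\in \L(E_0;E)$ and $B\in \L(F;F_0)$, the composition $BTA = (BR)\circ(SA)$ factors through the same $G$, yielding $BTA \in \mathcal{LP}(E_0;F_0)$ with $\|BTA\|_{\mathcal{LP}}\leq \|A\|\,\|T\|_{\mathcal{LP}}\,\|B\|$ after taking the infimum over factorizations. Since $\mathbb K$ is separable it has the isometric Lipschitz-lifting property by \cite[Lemma~2.10]{GK03}, so $Id_{\mathbb K} = Id_{\mathbb K}\circ Id_{\mathbb K}$ certifies $Id_{\mathbb K}\in \mathcal{LP}$ with $\|Id_{\mathbb K}\|_{\mathcal{LP}}\leq 1$; the reverse bound and the inequality $\|T\|\leq \|T\|_{\mathcal{LP}}$ both follow from $\|T\|\leq \|R\|\|S\|$ for every admissible factorization.

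The seminorm axioms on each component $\mathcal{LP}(E;F)$ are where Lemma~\ref{Lemma: Sum Llp} becomes essential. Scalar homogeneity is trivial (push the scalar into $R$). For the triangle inequality, given $T_1,T_2\in \mathcal{LP}(E;F)$ and $\varepsilon>0$, pick factorizations $T_i = R_i\circ S_i$ through spaces $G_i$ with the isometric Lipschitz-lifting property; after rescaling assume $\|R_i\|\leq 1$ and $\|S_i\|\leq \|T_i\|_{\mathcal{LP}}+\varepsilon$. By Lemma~\ref{Lemma: Sum Llp} the $\ell_1$-sum $G = G_1\oplus_1 G_2$ again has the isometric Lipschitz-lifting property. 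Setting $S(e)=(S_1 e, S_2 e)$ and $R(g_1,g_2)=R_1 g_1 + R_2 g_2$ gives $T_1+T_2 = R\circ S$ with $\|R\|\leq\max(\|R_1\|,\|R_2\|)\leq 1$ and $\|S\|\leq \|S_1\|+\|S_2\|$, hence $\|T_1+T_2\|_{\mathcal{LP}}\leq \|T_1\|_{\mathcal{LP}}+\|T_2\|_{\mathcal{LP}}+2\varepsilon$.

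The main obstacle is completeness of each component, and it is handled by the standard telescoping argument for factorization-type ideals. Let $(T_n)$ be Cauchy in $(\mathcal{LP}(E;F),\|\cdot\|_{\mathcal{LP}})$; pass to a subsequence with $\|T_{n+1}-T_n\|_{\mathcal{LP}}\leq 2^{-n}$ and fix factorizations $T_{n+1}-T_n = R_n\circ S_n$ through $G_n$ with $\|R_n\|,\|S_n\|\leq 2^{-n/2}$ after rescaling, together with an initial factorization $T_1 = R_0\circ S_0$ through some $G_0$. By Lemma~\ref{Lemma: Sum Llp} applied to $(G_n)_{n\geq 0}$, the space $G=\bigl(\bigoplus_{n\geq 0}G_n\bigr)_{\ell_1}$ has the isometric Lipschitz-lifting property. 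Define $S\colon E\to G$ by $S(e)=(S_n e)_{n\geq 0}$ and $R\colon G\to F$ by $R\bigl((g_n)_{n\geq 0}\bigr)=\sum_{n\geq 0} R_n g_n$; both are bounded because the series $\sum_{n\geq 1}\|R_n\|$ and $\sum_{n\geq 1}\|S_n\|$ are summable geometric ones. A direct computation gives $R\circ S(e) = R_0 S_0 e + \sum_{n\geq 1}(T_{n+1}-T_n)e$, which equals the operator-norm limit $T(e)$ of the Cauchy sequence (the latter exists since $\|\cdot\|\leq \|\cdot\|_{\mathcal{LP}}$), so $T = R\circ S \in \mathcal{LP}(E;F)$. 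Applying the same construction to the tails $T-T_n = \sum_{k\geq n}(T_{k+1}-T_k)$ produces factorizations through $\bigl(\bigoplus_{k\geq n}G_k\bigr)_{\ell_1}$ whose norms decay geometrically, yielding $\|T-T_n\|_{\mathcal{LP}}\to 0$ and completing the proof.
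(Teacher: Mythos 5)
Your proof is correct, and it follows exactly the route the paper intends: the paper omits the argument, remarking only that Lemma~\ref{Lemma: Sum Llp} is ``the key'' to the result, and your use of the $\ell_1$-sum of spaces with the isometric Lipschitz-lifting property for both the triangle inequality and the telescoping completeness argument is precisely that key step, with the remaining axioms handled in the standard way. The only cosmetic point is that Lemma~\ref{Lemma: Sum Llp} is stated for infinite sequences, so for the two-term sum in the triangle inequality you should pad with zero spaces (or note the finite case directly); this is immediate.
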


\begin{theorem}  The inclusion $\mathcal{LP} \subset (\mathcal{LP} \circ \Lip_0)\cap \L$ is strict.
\end{theorem}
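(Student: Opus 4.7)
The plan is to show that the right-hand side of the inclusion is in fact all of $\L$, and then to produce a single linear operator that fails to be LLP-factorable.

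First I would argue that $(\mathcal{LP} \circ \Lip_0) \cap \L = \L$. Fix Banach spaces $E$ and $F$ and any $T \in \L(E;F)$. By \cite[Theorem~3.1]{GK03}, the Lipschitz-free space $\Ae(E)$ has the isometric Lipschitz-lifting property, so the trivial factorization $L_T = L_T \circ Id_{\Ae(E)}$ of the linearization through $\Ae(E)$ itself is a valid LLP-factorization of $L_T$; hence $L_T \in \mathcal{LP}(\Ae(E);F)$. By \cite[Proposition~3.2]{ARSY16}, this is equivalent to $T \in (\mathcal{LP}\circ \Lip_0)(E;F)$. Since $T$ was arbitrary, the desired identification follows.

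Next I would exhibit an element of $\L \setminus \mathcal{LP}$. For this I would first establish the auxiliary claim: \emph{if $Id_E \in \mathcal{LP}(E;E)$, then $E$ has the Lipschitz-lifting property}. Suppose $Id_E = R \circ S$ with $S \in \L(E;G)$, $R \in \L(G;E)$ and $G$ carrying an isometric lifting $U \colon G \to \Ae(G)$ with $\beta_G \circ U = Id_G$. Using the two identities $L_R = \beta_E \circ \widehat R$ and $L_R = R \circ \beta_G$ available for the linear operator $R$, the map $V := \widehat R \circ U \circ S \colon E \to \Ae(E)$ satisfies
$$
\beta_E \circ V = (\beta_E \circ \widehat R) \circ U \circ S = R \circ \beta_G \circ U \circ S = R \circ S = Id_E,
$$
so $V$ is a linear lifting of $Id_E$ into $\Ae(E)$, proving the claim.

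The main obstacle is then to pin down a concrete Banach space $E$ that fails to have the Lipschitz-lifting property; the identity $Id_E$ on such a space, together with the auxiliary claim and the identification $(\mathcal{LP}\circ \Lip_0)\cap \L = \L$, immediately yields an operator witnessing the strict inclusion. I would appeal here to a Banach space from the Godefroy--Kalton circle of constructions (or a later refinement) for which LLP is known to fail, thereby completing the argument.
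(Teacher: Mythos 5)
Your proof follows the paper's argument essentially verbatim: membership of $Id_E$ in $(\mathcal{LP}\circ \Lip_0)\cap\L$ via the isometric Lipschitz-lifting property of $\Ae(E)$ (your version even records the slightly stronger fact that the right-hand side is all of $\L$), followed by the same computation $\beta_E\circ\widehat{R}\circ U\circ S=Id_E$ showing that $Id_E\in\mathcal{LP}(E;E)$ would force $E$ to have the lifting property. The one detail you leave unnamed --- a concrete Banach space failing the Lipschitz-lifting property --- is supplied in the paper by $E=c_0(\Gamma)$ with $\Gamma$ uncountable, citing \cite[p.~128]{GK03}, so your argument is complete once that example is inserted.
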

\begin{proof}
Take $E$ a Banach space without the Lipschitz-lifting property (for example $E=c_0(\Gamma)$ with $\Gamma$ an uncountable set, see \cite[p. 128]{GK03}), and consider the identity map $Id_E$. Since, by \cite[Lemma~2.10]{GK03}, $\Ae(E)$ has the isometric Lipschitz-lifting property, the linearization of $Id_E$, $L_{Id_E}\in \mathcal{LP}(\Ae(E),E)$. This implies that $Id_E \in (\mathcal{LP} \circ \Lip_0)\cap \L(E,E)$.

Now, suppose that $Id_E \in \mathcal{LP}(E,E)$. Then there exist a Banach space $G$ with the isometric Lipschitz-lifting property and operators $S\in \L(E;G)$ and $R\in \L(G;E)$ such that $Id_E=R\circ S$. Consider the following commutative diagram
$$
\xymatrix{
E\ar[rr]^{Id_E} \ar[dr]_{S} & & E\ar[dd]^{\delta_E}\\
& G \ar[d]_{\delta_G} \ar[ur]_{R} &\\
& \Ae(G)\ar[uur]_{L_R}\ar[r]_{\widehat{R}}& \Ae(E)
}.
$$
Also, as $G$ has the isometric Lipschitz-lifting property, then there exists an operator $U\in \L(G,\Ae(G))$ such that $\beta_G\circ U=Id_G$. Take $T=\widehat{R}\circ U\circ S \in \L(E;\Ae(E))$. Then $\beta_E \circ T=L_{R}\circ U\circ S$. As $R\in\L(G;E)$, then $L_R=R\circ \beta_G$, then
$$
\beta_E \circ T= R\circ \beta_G\circ U\circ S=R\circ S=Id_E.
$$
Then, $E$ has the Lipschitz-lifting property which is a contradiction.
\end{proof}

Now we focus on Banach Lipschitz operator ideals. In the same way we did for Banach linear operator ideals, we may proceed to obtain a procedure for Banach Lipschitz operator ideals.

\begin{proposition}\label{Prop: lip} Let $\I$ be a Banach Lipschitz operator ideal. Then
$$
(\I \cap \L) \circ \Lip_0 \subset \I.
$$
Moreover, for any pointed metric space $X$, any Banach space $E$ and $f\in (\I \cap \L) \circ \Lip_0 (X;E)$ we have
$$
\norm{f}_{\I}\leq \norm{f}_{(\I \cap \L) \circ \Lip_0}.
$$
\end{proposition}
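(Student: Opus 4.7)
The plan is to unwrap the definitions and exhibit $f$ as a composition that the ideal axioms of $\I$ must respect. Fix a pointed metric space $X$, a Banach space $E$, and $f \in (\I \cap \L) \circ \Lip_0(X;E)$. By the characterization of composition ideals from \cite[Proposition~3.2]{ARSY16}, this is equivalent to $L_f \in (\I \cap \L)(\Ae(X);E)$, with
\[
\norm{f}_{(\I \cap \L) \circ \Lip_0} = \norm{L_f}_{\I \cap \L} = \norm{L_f}_{\I},
\]
where the second equality is just the definition of the norm on $\I \cap \L$.

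The key observation is the universal factorization $f = L_f \circ \delta_X$, already noted in Section~2. Since $L_f \in \I(\Ae(X);E)$ (viewed as a Lipschitz map between Banach spaces) and $\delta_X \in \Lip_0(X;\Ae(X))$ is an isometric embedding with $\Lip(\delta_X)=1$, the ideal property (iii) of $\I$, applied with $g = \delta_X$ and $S = Id_E$, yields $f \in \I(X;E)$ together with the quantitative bound
\[
\norm{f}_\I = \norm{Id_E \circ L_f \circ \delta_X}_\I \leq \norm{Id_E} \cdot \norm{L_f}_\I \cdot \Lip(\delta_X) = \norm{L_f}_\I.
\]
Combining this with the first display gives $\norm{f}_\I \leq \norm{f}_{(\I \cap \L) \circ \Lip_0}$, which is exactly the inclusion and norm estimate claimed.

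There is essentially no obstacle: once one remembers that (a) composition membership is read off the linearization, (b) the norm on $\I \cap \L$ is inherited verbatim from $\I$, and (c) $\delta_X$ is a $1$-Lipschitz map vanishing at $0$, the result is a one-line application of the ideal axiom. The only mild subtlety is to be careful that the linear operator $L_f$, which lives a priori in $\I \cap \L$, is legitimately usable as a Lipschitz map in axiom (iii); this is immediate because $\L \subset \Lip_0$ between Banach spaces and the norms coincide.
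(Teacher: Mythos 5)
Your proof is correct and follows essentially the same route as the paper's: both pass to the linearization $L_f \in (\I\cap\L)(\Ae(X);E)$, use the factorization $f = L_f\circ\delta_X$, and invoke the ideal property of $\I$ with the $1$-Lipschitz map $\delta_X$ to get $\norm{f}_\I \le \norm{L_f}_\I = \norm{f}_{(\I\cap\L)\circ\Lip_0}$. Your closing remark about $L_f$ being usable in axiom (iii) via $\L\subset\Lip_0$ is a correct (and harmless) extra precision that the paper leaves implicit.
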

\begin{proof} Let $X$ be a pointed metric space and $E$ be a Banach space. A Lipschitz operator $f \in (\I \cap \L) \circ \Lip_0(X;E)$ if and only if $L_f \in (\I \cap \L)(\Ae(X);E)$. Then $L_f \in \I(\Ae(X);E)$ and, since $f=L_f \circ \delta_X$, we conclude that $f \in \I(X;E)$. Moreover,
$$
\norm{f}_{\I}=\norm{L_f \circ \delta_X}_{\I}\leq \norm{L_f}_{\I}=\norm{L_f}_{\I \cap \L}=\norm{f}_{(\I\cap\L)\circ \Lip_0},
$$
and the proof finish.
\end{proof}

This methods to obtain Banach Lipschitz operator ideals from a Banach linear operator ideal and viceversa fits in the theory of {\it Galois connection between lattices}. Recall that a non-empty ordered set $P$ is a complete lattice if for every subset $S\subset P$, $\bigvee S=\sup S$ and $\bigwedge S=\inf S$ belongs to $P$. We consider the lattices $\LIP$ and $\LIN$ of all Banach Lipschitz operator ideals and Banach linear operator ideals, respectively, both consider with the order given by the continuous inclusion. That is, for $\I,\J$ in $\LIP$, $\I\leq \J$ if and only if $\I\subset \J$ and, for every pointed metric space $X$ and every Banach space $E$, if $f\in \I(X;E)$, then $\|f\|_{\J}\leq\|f\|_{\I}$.
It is well known that in $\LIN$ the supremum is $\L$, meanwhile the infimum is the Banach ideal of nuclear operators, $\N$. For the case of $\LIP$, we have that the supremum is $\Lip_0$ and the infimum is $\N\circ\Lip_0$ as the next lemma shows. Following \cite[Proposition~4.2]{CaJi15}, a Lipschitz operator $f\in \N\circ \Lip_0(X;E)$ if there exist sequences $(f_n)_n\subset X^\#$ y $(e_n)_n\subset E$ with $\sum_n^{\infty} \Lip(f_n) \|e_n\|<\infty$ such that  $f=\sum_n^{\infty} f_n e_n$. Moreover $\|f\|_{\N \circ \Lip_0}=\inf\{\sum_n^\infty \Lip(f_n) \|e_n\|\}$, where the infimum is taken over all the representations of $f$.

\begin{lemma}\label{Lemma: Nuclear Min}
Let $\I$ be a Banach Lipschitz operator ideal. Then $\N\circ \Lip_0\subset \I$. Moreover, for all pointed metric space $X$ and Banach space $E$, if $f\in \N\circ \Lip_0(X;E)$, then $\|f\|_{\I}\leq \|f\|_{\N\circ \Lip_0}$.
\end{lemma}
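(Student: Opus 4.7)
The plan is to exploit the rank-one building blocks of $\N\circ\Lip_0$ via the ideal property (iii) and then pass to limits using the completeness of $\I$. Fix a pointed metric space $X$, a Banach space $E$, and $f\in\N\circ\Lip_0(X;E)$ with a representation $f=\sum_{n=1}^\infty f_n\, e_n$ where $f_n\in X^\#$, $e_n\in E$ and $\sum_{n=1}^\infty \Lip(f_n)\,\|e_n\|<\infty$.

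First I would handle a single rank-one summand $f_n\, e_n$. Writing $M_{e_n}\colon\mathds{K}\to E$ for the linear operator $\lambda\mapsto \lambda e_n$, one has the factorization
$$
f_n\, e_n \;=\; M_{e_n}\circ Id_{\mathds K}\circ f_n,
$$
with $f_n\in\Lip_0(X;\mathds K)$, $Id_{\mathds K}\in\I(\mathds K;\mathds K)$ by property (ii), and $M_{e_n}\in\L(\mathds K;E)$ of norm $\|e_n\|$. The ideal property (iii)--(iii') then gives $f_n\, e_n\in \I(X;E)$ with
$$
\|f_n\, e_n\|_{\I}\;\leq\;\Lip(f_n)\cdot\|Id_{\mathds K}\|_{\I}\cdot\|M_{e_n}\|\;=\;\Lip(f_n)\,\|e_n\|.
$$

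Next I would use completeness. Since $\sum_n \Lip(f_n)\|e_n\|<\infty$, the series $\sum_n f_n\, e_n$ is absolutely convergent in the Banach space $\bigl(\I(X;E),\|\cdot\|_{\I}\bigr)$, hence converges there to some $g\in\I(X;E)$. Because $\Lip(\cdot)\leq\|\cdot\|_{\I}$ by (i'), convergence in $\I$-norm forces convergence in $\Lip_0$-norm, so $g=f$ pointwise and $f\in\I(X;E)$ with
$$
\|f\|_{\I}\;\leq\;\sum_{n=1}^\infty \|f_n\, e_n\|_{\I}\;\leq\;\sum_{n=1}^\infty \Lip(f_n)\,\|e_n\|.
$$
Taking the infimum over all admissible representations of $f$ yields $\|f\|_{\I}\leq\|f\|_{\N\circ\Lip_0}$.

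There is essentially no obstacle here; the only subtle point is checking that the rank-one pieces indeed land in $\I$ with the correct norm estimate, and this is immediate from the definition of Banach Lipschitz operator ideal once the factorization $f_n\, e_n=M_{e_n}\circ Id_{\mathds K}\circ f_n$ is recognized. The rest is absolute convergence in a Banach space.
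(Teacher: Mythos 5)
Your proof is correct and rests on the same key computation as the paper's: the rank-one factorization $g\,e = M_e\circ Id_{\mathds K}\circ g$ giving $\|g\,e\|_{\I}\leq \Lip(g)\|e\|$, followed by completeness of $\I(X;E)$. The only cosmetic difference is that the paper first reduces to finite-rank operators via their density in $\N\circ\Lip_0$ (citing Cabrera-Padilla--Jim\'enez-Vargas) and then applies the rank-one estimate, whereas you sum the rank-one pieces of a nuclear representation directly, which is slightly more self-contained.
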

\begin{proof}
As it was shown in \cite[Proposition~4.2]{CaJi15}, every $f\in \N\circ \Lip_0(X;E)$ is the  $\|\cdot\|_{\N\circ \Lip_0}$-limit of Lipschitz finite rank operators. The claim will follows if we show that for every $g\in \Lip_{0\F}(X;E)$, $\|g\|_{\I}\leq\|g\|_{\N\circ \Lip_0}$. As we can derive from  \cite[Corollary~2.6 (iii)]{JiSeVi14} we may take $g=\sum_{j=1}^{n} g_j e_j$ where $g_j\in X^{\#}$ and $e_j \in E$, $j=1,\ldots,n.$ Then
$$
\|g\|_{\I}=\Big\|\sum_{j=1}^{n} g_j e_j\Big\|_{\I}\leq \sum_{j=1}^{n} \|g_j e_j\|_{\I}=\sum_{j=1}^{n} \|g_j e_j\| =\sum_{j=1}^{n} \Lip(g_j)\|e_j\|,
$$
and taking the infimum over all the representations of $g$, we get that $\|g\|_{\I}\leq \|g\|_{\N\circ \Lip_0}$. The proof follows.
\end{proof}

To see that $\LIN$ is a complete lattice, we will construct the infimum of a subset of $\LIN$ in the next well known lemma. The same construction can be applied to $\LIP$. We omit the proof.

\begin{lemma}\label{Lemma: infimum}
\begin{enumerate}[\upshape (a)]
\item For any non-empty subset $S \subset \LIN$, consider $\bigcap S$ defined, for Banach spaces $E$ and $F$ as
$$
\bigcap S(E;F)=\Big\{T\in \L(E;F) \colon T \in \A(E;F) \quad \forall \ \A \in S \ \mbox{with} \ \sup_{\A \in S} \|T\|_{\A}<\infty\Big\},
$$
with the norm $\|T\|_{\bigcap S} = \sup_{\A \in S} \|T\|_{\A}$.
Then $(\bigcap S; \|\cdot\|_{\bigcap S})$ is a Banach linear operator ideal such that
$\bigcap S\subset \A$ for every $\A \in S$.

Moreover, if there exists a Banach operator ideal $\B$ such that $\B\subset \A$ for every $\A \in S$, then $\B \subset \bigcap S$ and, for every Banach spaces $E$ and $F$ and $T\in\B(E,F)$, $\|T\|_{\bigcap S}\leq \|T\|_{\B}$.

\item For any non-empty subset $S \subset \LIP$, consider $\bigcap S$ defined, for a pointed metric space $X$ and a Banach space $E$ as
$$
\bigcap S(X;E)=\Big\{f\in \Lip_0(X;E) \colon f \in \I(X;E) \quad \forall \ \I \in S \ \mbox{with} \ \sup_{\I \in S} \|T\|_{\I}<\infty\Big\},
$$
with the norm $\|f\|_{\bigcap S} = \sup_{\I \in S} \|f\|_{\I}$.
Then $(\bigcap S; \|\cdot\|_{\bigcap S})$ is a Banach Lipschitz operator ideal such that
$\bigcap S\subset \I$ for every $\I \in S$.

Moreover, if there exists a Banach Lipschitz operator ideal $\J$ such that $\J\subset \I$ for every $\I \in S$, then $\J \subset \bigcap S$ and, for every pointed metric space $X$ and every Banach space $E$ and $f\in\J(X,E)$, $\|f\|_{\bigcap S}\leq \|f\|_{\J}$.
\end{enumerate}
\end{lemma}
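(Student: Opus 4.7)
The plan is to verify directly that $\bigcap S$, equipped with the supremum norm $\|\cdot\|_{\bigcap S}$, satisfies the axioms of a Banach (linear or Lipschitz) operator ideal, and then to check the universal property that identifies it as the infimum in the lattice. Since the arguments for parts (a) and (b) are formally identical --- replacing $\L$ by $\Lip_0$, composition on the domain side by composition with a Lipschitz map, and operator-norm bounds by Lipschitz-constant bounds --- I would write the linear case in detail and indicate that (b) follows \emph{mutatis mutandis}.

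For the algebraic and ideal axioms in (a), fix Banach spaces $E,F$, take $T_1,T_2\in \bigcap S(E;F)$ and $\lambda\in\mathds K$, and observe that
$$
\sup_{\A \in S} \|\lambda T_1 + T_2\|_\A \leq |\lambda| \sup_{\A \in S} \|T_1\|_\A + \sup_{\A \in S}\|T_2\|_\A < \infty,
$$
so $\bigcap S(E;F)$ is a linear subspace and $\|\cdot\|_{\bigcap S}$ is a norm (separation uses $\|T\|_{\L} \leq \|T\|_\A$ for any single $\A \in S$). The identity satisfies $\|Id_{\mathds K}\|_\A = 1$ for every $\A \in S$, hence $\|Id_{\mathds K}\|_{\bigcap S} = 1$, and the ideal inequality passes to the sup: for $R \in \L(G;E)$ and $U \in \L(F;H)$,
$$
\|U T R\|_{\bigcap S} = \sup_{\A \in S} \|U T R\|_\A \leq \|U\|\, \|R\| \sup_{\A \in S} \|T\|_\A = \|U\|\, \|R\|\, \|T\|_{\bigcap S}.
$$

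The main obstacle is completeness. Given a $\|\cdot\|_{\bigcap S}$-Cauchy sequence $(T_n) \subset \bigcap S(E;F)$, it is Cauchy in each $(\A(E;F), \|\cdot\|_\A)$, so $T_n \to T^{\A}$ in $\A$; because $\|\cdot\|_{\L} \leq \|\cdot\|_\A$ the limits $T^\A$ all agree as elements of $\L(E;F)$, and I call their common value $T$. Given $\varepsilon > 0$, choose $N$ with $\sup_{\A\in S} \|T_n - T_m\|_\A < \varepsilon$ for $n,m \geq N$; fixing $\A$ and letting $m \to \infty$ inside $\|\cdot\|_\A$ yields $\|T_n - T\|_\A \leq \varepsilon$, and the uniformity in $\A$ upgrades this to $\|T_n - T\|_{\bigcap S} \leq \varepsilon$. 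Then $\|T\|_{\bigcap S} \leq \|T_N\|_{\bigcap S} + \varepsilon < \infty$, so $T \in \bigcap S(E;F)$ and $T_n \to T$ in $\bigcap S$. This is the standard $\ell^\infty$-style argument for intersections of Banach spaces with a supremum of norms, and it is the only step that genuinely requires care.

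For the universal property, the inclusion $\bigcap S \subset \A$ together with $\|T\|_\A \leq \|T\|_{\bigcap S}$ for each $\A \in S$ is immediate from the definition of the sup. Conversely, if a Banach linear operator ideal $\B$ satisfies $\B \subset \A$ with $\|T\|_\A \leq \|T\|_\B$ for every $\A \in S$, then for any $T \in \B(E;F)$ one has $\sup_{\A \in S} \|T\|_\A \leq \|T\|_\B < \infty$, hence $T \in \bigcap S(E;F)$ with $\|T\|_{\bigcap S} \leq \|T\|_\B$; this identifies $\bigcap S$ as the infimum of $S$ in $\LIN$. For (b) the same scheme goes through verbatim once one writes the ideal inequality in the Lipschitz form $\|U \circ f \circ g\|_{\bigcap S} \leq \|U\|\, \Lip(g)\, \|f\|_{\bigcap S}$ and uses the Lipschitz ideal-norm axiom $\Lip(f) \leq \|f\|_\I$ in place of the $\L$-norm domination when identifying the common limit across different $\I \in S$.
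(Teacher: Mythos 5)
Your proof is correct and complete; the paper itself omits the proof of this lemma, declaring it ``well known,'' and your argument is exactly the standard one the authors have in mind. In particular, you correctly handle the only delicate point --- completeness of $\bigl(\bigcap S(E;F),\|\cdot\|_{\bigcap S}\bigr)$ via the uniform-in-$\A$ Cauchy estimate and the identification of the limits $T^{\A}$ through the domination $\|\cdot\|\leq\|\cdot\|_{\A}$ (respectively $\Lip(\cdot)\leq\|\cdot\|_{\I}$ in part (b)) --- as well as the verification that the limit has finite supremum norm.
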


\begin{proposition}\label{Prop: Complete lattice}
$\LIN$ and $\LIP$ are complete lattices with the order given by the continuous inclusion.
\end{proposition}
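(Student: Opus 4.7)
The plan is to reduce the statement to Lemma~\ref{Lemma: infimum} via the standard order-theoretic observation that a poset is a complete lattice as soon as every subset admits an infimum. I would first observe that Lemma~\ref{Lemma: infimum} already delivers $\bigwedge S$ for every \emph{non-empty} $S\subset \LIN$ (resp. $S\subset \LIP$), so only the empty-set infimum, i.e.\ a greatest element of the lattice, is missing.

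Next I would exhibit the greatest elements explicitly. The Banach linear operator ideal $\L$ is the top of $\LIN$: trivially $\A\subset\L$ for every $\A\in\LIN$, and the defining inequality $\|T\|\leq \|T\|_\A$ for $T\in\A(E;F)$ is exactly the norm condition in the definition of the order, so $\A\leq\L$. The analogous argument, using axiom (i'), namely $\Lip(f)\leq\|f\|_\I$ for $f\in\I(X;E)$, shows that $\Lip_0$ is the top of $\LIP$.

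With infima of all subsets in hand, arbitrary suprema follow from the standard ``infimum of upper bounds'' construction. For $S\subset \LIN$, let
$$
U(S)=\{\B\in\LIN : \A\leq\B \text{ for all } \A\in S\}.
$$
Since $\L\in U(S)$, the set $U(S)$ is non-empty, hence by Lemma~\ref{Lemma: infimum} it has an infimum $\bigwedge U(S)$. A short verification shows $\bigvee S=\bigwedge U(S)$: each $\A\in S$ is a lower bound of $U(S)$, so it lies below the greatest lower bound $\bigwedge U(S)$, making $\bigwedge U(S)$ an upper bound of $S$; conversely any upper bound of $S$ belongs to $U(S)$ and therefore dominates $\bigwedge U(S)$, so $\bigwedge U(S)$ is the least upper bound. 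The same argument with $\Lip_0$ in the role of $\L$ works in $\LIP$.

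I do not anticipate a genuine obstacle: the substance of the theorem is packaged in Lemma~\ref{Lemma: infimum} together with the existence of the maximal elements $\L$ and $\Lip_0$. As a consistency check, the bottom elements produced by this construction, namely $\bigvee\emptyset=\bigwedge\LIN$ and $\bigvee\emptyset=\bigwedge\LIP$, coincide with $\N$ and with $\N\circ\Lip_0$ respectively, which matches the description in the text and Lemma~\ref{Lemma: Nuclear Min}.
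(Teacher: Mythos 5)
Your proposal is correct and follows essentially the same route as the paper: top element plus infima of all non-empty subsets, with arbitrary suprema recovered as infima of the sets of upper bounds. The only difference is cosmetic --- the paper delegates the last step to \cite[Theorem~2.31]{DP02}, whereas you write out the proof of that standard order-theoretic fact explicitly.
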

\begin{proof} Since $\LIP$ has top element and, by the above lemma,  every non-empty subset of $\LIP$ has infimum, an application of \cite[Theorem~2.31]{DP02} gives that $\LIP$ is a complete lattice. The same holds for $\LIN$.
\end{proof}

Now we are ready to establish the main result of this section, which states the relationship between Banach Lipschitz operator ideals and Banach linear operator ideals. Recall that for $P$ and $Q$ ordered sets, a pair $(\psi,\phi)$ of maps $\psi:P\to Q$ and $\phi:Q\to P$ is a \textit{Galois connection} between $P$ and $Q$ if for all $p\in P$ and $q\in Q$ holds that $\psi(p)\leq q$ if and only if $p\leq\phi(q)$.

\begin{theorem}\label{Teo: Galois} The maps $\psi\colon \LIN \rightarrow \LIP$ and $\phi\colon \LIP\rightarrow\LIN$ defined as
$$
\psi(\A)=\A\circ \Lip_0 \quad \mbox{and} \quad\phi(\I)=\I\cap \L,
$$
set up a Galois connection between $\LIN$ and $\LIP$.
\end{theorem}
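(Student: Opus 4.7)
The plan is to verify the Galois connection condition directly, using the two propositions already at hand, namely Proposition~\ref{Prop: ban} ($\A\subset(\A\circ\Lip_0)\cap\L$ with the associated norm inequality) and Proposition~\ref{Prop: lip} ($(\I\cap\L)\circ\Lip_0\subset\I$ with its norm inequality). Recall that the order on $\LIN$ and $\LIP$ combines set inclusion with a pointwise norm inequality, so each implication will need both checks.

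For the forward implication, I would assume $\psi(\A)=\A\circ\Lip_0\leq\I$ and show $\A\leq\I\cap\L=\phi(\I)$. Given Banach spaces $E,F$ and $T\in\A(E;F)$, Proposition~\ref{Prop: ban} places $T$ in $(\A\circ\Lip_0)\cap\L(E;F)$ with $\|T\|_{(\A\circ\Lip_0)\cap\L}\leq\|T\|_{\A}$; in particular $T\in\A\circ\Lip_0(E;F)$. The hypothesis then yields $T\in\I(E;F)$ with $\|T\|_{\I}\leq\|T\|_{\A\circ\Lip_0}\leq\|T\|_{\A}$. Since $T$ is linear, this exactly says $T\in\I\cap\L(E;F)$ and $\|T\|_{\I\cap\L}=\|T\|_{\I}\leq\|T\|_{\A}$, as required.

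For the backward implication, I would assume $\A\leq\phi(\I)=\I\cap\L$ and show $\A\circ\Lip_0\leq\I$. Let $X$ be a pointed metric space, $E$ a Banach space, and $f\in\A\circ\Lip_0(X;E)$, so that its linearization $L_f$ belongs to $\A(\Ae(X);E)$ with $\|L_f\|_{\A}=\|f\|_{\A\circ\Lip_0}$. By hypothesis, $L_f\in(\I\cap\L)(\Ae(X);E)$ with $\|L_f\|_{\I\cap\L}\leq\|L_f\|_{\A}$, so $L_f\in\I(\Ae(X);E)$. Hence $L_f\in(\I\cap\L)\circ\Lip_0(X;E)$, and applying Proposition~\ref{Prop: lip} (equivalently, using $f=L_f\circ\delta_X$ together with the ideal property) gives $f\in\I(X;E)$ with
\[
\|f\|_{\I}\leq\|L_f\|_{\I}=\|L_f\|_{\I\cap\L}\leq\|L_f\|_{\A}=\|f\|_{\A\circ\Lip_0}.
\]

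There is no real obstacle here: both directions reduce to chaining the two propositions with the identification $f\mapsto L_f$ from the isometric isomorphism $\Lip_0(X;E)\cong\L(\Ae(X);E)$. The only point requiring a bit of care is to make sure the norm inequalities go in the right direction to witness the inequalities $\psi(\A)\leq\I$ and $\A\leq\phi(\I)$ of the lattice order, rather than just the set-theoretic inclusions; this is exactly why Proposition~\ref{Prop: ban} and Proposition~\ref{Prop: lip} were stated with explicit norm estimates.
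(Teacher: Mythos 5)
Your proof is correct and rests on exactly the same ingredients as the paper's, namely Propositions~\ref{Prop: ban} and \ref{Prop: lip} (the ``unit/counit'' inclusions with their norm estimates) together with the evident monotonicity of $\psi$ and $\phi$. The only difference is presentational: the paper invokes the standard equivalence \cite[Lemma~7.26]{DP02} between the adjunction condition and the ``monotone maps plus $\A\leq\phi(\psi(\A))$, $\psi(\phi(\I))\leq\I$'' formulation, whereas you verify the defining biconditional directly by chaining those same two propositions through the isometry $f\mapsto L_f$.
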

\begin{proof}
It is clear that if $\A,\B \in \LIN$ with $\A\leq \B$, then $\psi(\A) \leq \psi(B)$ and that if $\I,\J \in \LIP$ with $\I\leq \J$, then $\phi(\I) \leq \phi(\J)$. Then the result follows by \cite[Lemma~7.26]{DP02} in combination with Proposition~\ref{Prop: ban} and Proposition~\ref{Prop: lip}.
\end{proof}

Also, as a direct consequence of \cite[Lemma~7.26]{DP02} we have the following results.
\begin{corollary}\label{Coro: ban}
Let $\A$ be a Banach linear operator ideal. Then
$$
\left( (\A\circ \Lip_0)\cap\L\right)\circ \Lip_0=\A\circ \Lip_0 \quad isometrically,
$$
\end{corollary}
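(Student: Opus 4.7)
The corollary is a direct formal consequence of the Galois connection established in Theorem~\ref{Teo: Galois}. For any Galois connection $(\psi,\phi)$ between ordered sets one has the composite identity $\psi\circ\phi\circ\psi=\psi$; this is precisely the content of \cite[Lemma~7.26]{DP02}, which is cited in the statement of the corollary. Applied to the specific pair of Theorem~\ref{Teo: Galois}, namely $\psi(\A)=\A\circ\Lip_0$ and $\phi(\I)=\I\cap\L$, evaluation at a Banach linear operator ideal $\A$ yields
$$
((\A\circ\Lip_0)\cap\L)\circ\Lip_0=\A\circ\Lip_0
$$
as elements of the lattice $\LIP$. Since the order on $\LIP$ is defined so that $\I\le\J$ encodes both the set-theoretic inclusion $\I\subset\J$ and the norm inequality $\|f\|_\J\le\|f\|_\I$ for $f\in\I$, an order-equality in $\LIP$ is automatically an \emph{isometric} equality of Banach Lipschitz operator ideals, which is exactly what the corollary asserts.

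If one prefers a direct argument, the same conclusion drops out of Propositions~\ref{Prop: ban} and~\ref{Prop: lip} without invoking abstract lattice theory. Applying the order-preserving operation $\psi$ to the inclusion $\A\subset(\A\circ\Lip_0)\cap\L$ of Proposition~\ref{Prop: ban} produces the inclusion $\A\circ\Lip_0\subset((\A\circ\Lip_0)\cap\L)\circ\Lip_0$ together with the norm inequality $\|f\|_{((\A\circ\Lip_0)\cap\L)\circ\Lip_0}\le\|f\|_{\A\circ\Lip_0}$ for every $f$ in the left-hand side. Conversely, Proposition~\ref{Prop: lip} applied with $\I=\A\circ\Lip_0$ gives the reverse inclusion with the reverse norm inequality. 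Combining the two inclusions yields the desired isometric identity.

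There is essentially no technical obstacle: Propositions~\ref{Prop: ban} and~\ref{Prop: lip} have already been proved, and the corollary is just the identity $\psi\phi\psi=\psi$ for an adjoint pair. The only point that deserves a moment of care is the bookkeeping between order-equality in $\LIP$ and isometric equality of Banach Lipschitz operator ideals, but this is immediate from the paper's convention of encoding the norm dominance into the very definition of the order $\le$ on $\LIP$.
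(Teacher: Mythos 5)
Your proposal is correct and takes essentially the same route as the paper, which derives the corollary directly from \cite[Lemma~7.26]{DP02} applied to the Galois connection of Theorem~\ref{Teo: Galois}; your second, hands-on argument via Propositions~\ref{Prop: ban} and~\ref{Prop: lip} is just an unwinding of that same identity $\psi\circ\phi\circ\psi=\psi$. The remark that order-equality in $\LIP$ encodes isometric equality is the right bookkeeping and matches the paper's conventions.
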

\begin{corollary}\label{Coro: lip}
Let $\I$ be a Banach Lipschitz operator ideal. Then
$$
\left((\I\cap\L)\circ \Lip_0\right)\cap\L=\I\cap\L \quad isometrically.
$$
\end{corollary}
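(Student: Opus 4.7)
The plan is to deduce this corollary from the general Galois-connection machinery, exactly as the paper suggests, using the identity $\phi\circ\psi\circ\phi=\phi$ which holds for any Galois connection (this is one of the standard equivalent formulations of \cite[Lemma~7.26]{DP02}). With $\psi(\A)=\A\circ\Lip_0$ and $\phi(\I)=\I\cap\L$ as in Theorem~\ref{Teo: Galois}, applying $\phi\circ\psi\circ\phi$ to $\I\in\LIP$ yields precisely $((\I\cap\L)\circ\Lip_0)\cap\L$, which must therefore equal $\phi(\I)=\I\cap\L$. So the set-theoretic equality is immediate.

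To confirm that the equality is \emph{isometric}, rather than merely a continuous inclusion in both directions, I would check the two norm estimates separately. For the inclusion $\I\cap\L\subset((\I\cap\L)\circ\Lip_0)\cap\L$, I apply Proposition~\ref{Prop: ban} with the Banach linear operator ideal $\A=\I\cap\L$: for every Banach spaces $E,F$ and every $T\in(\I\cap\L)(E;F)$ one gets
$$
\|T\|_{((\I\cap\L)\circ\Lip_0)\cap\L}\leq\|T\|_{\I\cap\L}.
$$

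For the reverse direction, I start from an operator $T\in((\I\cap\L)\circ\Lip_0)\cap\L(E;F)$. By definition $T$ is linear and belongs to $(\I\cap\L)\circ\Lip_0(E;F)$, so by Proposition~\ref{Prop: lip} applied to the Banach Lipschitz operator ideal $\I$ we have $T\in\I(E;F)$ with
$$
\|T\|_{\I}\leq\|T\|_{(\I\cap\L)\circ\Lip_0}=\|T\|_{((\I\cap\L)\circ\Lip_0)\cap\L}.
$$
Since $T$ is linear, $T\in(\I\cap\L)(E;F)$ and $\|T\|_{\I\cap\L}=\|T\|_{\I}$, which closes the chain of inequalities and gives the isometric equality.

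There is essentially no obstacle here, since both ingredients (Propositions~\ref{Prop: ban} and~\ref{Prop: lip}) have already been established and the abstract lattice-theoretic fact $\phi\circ\psi\circ\phi=\phi$ is invoked as a black box from \cite{DP02}. The only mild bookkeeping point is to keep track of the fact that the order on $\LIN$ and $\LIP$ is continuous inclusion, so that ``$\leq$'' simultaneously encodes containment and a norm inequality; once this is respected, the two applications of the order-theoretic lemma combined with the two propositions produce both inclusions with matching norms.
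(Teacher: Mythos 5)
Your proposal is correct and follows essentially the same route as the paper, which derives this corollary in one line as a direct consequence of the Galois-connection identity $\phi\circ\psi\circ\phi=\phi$ from \cite[Lemma~7.26]{DP02} applied to the connection of Theorem~\ref{Teo: Galois}. Your explicit unpacking of the two norm inequalities via Propositions~\ref{Prop: ban} and~\ref{Prop: lip} is precisely the content hidden in the paper's citation, so there is nothing to add or correct.
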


Next we formulate a criterion which allow us to identify whenever a Lipschitz Banach operator ideal is of composition type or not.

\begin{proposition}[Criterion]\label{Prop: Criterion} Let $\I$ be a Banach Lipschitz operator ideal. Then $\I$ is of composition type if and only if
$$
\I=(\I\cap \L)\circ \Lip_0 \quad isometrically.
$$
\end{proposition}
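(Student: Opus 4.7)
The plan is to prove the two implications separately, using the machinery already built up.

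For the easy direction ($\Leftarrow$), suppose $\I=(\I\cap\L)\circ\Lip_0$ isometrically. Since $\I\cap\L$ is a Banach linear operator ideal (this is the content of the definition of $\phi$ in Theorem~\ref{Teo: Galois}, motivated by \cite[Remark 2.9]{ARSY16}), this equality exhibits $\I$ as a composition of a linear ideal with $\Lip_0$, so $\I$ is of composition type by definition.

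For the harder direction ($\Rightarrow$), assume $\I$ is of composition type, so there exists a Banach linear operator ideal $\A$ with $\I=\A\circ\Lip_0$ isometrically. I would then apply the two Galois maps in sequence. First, applying $\phi$ to both sides gives
\[
\I\cap\L = (\A\circ\Lip_0)\cap\L
\]
isometrically. Next, applying $\psi$ to both sides yields
\[
(\I\cap\L)\circ\Lip_0 = \bigl((\A\circ\Lip_0)\cap\L\bigr)\circ\Lip_0
\]
isometrically. By Corollary~\ref{Coro: ban}, the right-hand side equals $\A\circ\Lip_0$ isometrically, which in turn equals $\I$ by our initial assumption. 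Hence $\I=(\I\cap\L)\circ\Lip_0$ isometrically.

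There is no real obstacle here; the result is essentially a formal consequence of the Galois connection identity $\psi\circ\phi\circ\psi=\psi$ (which is precisely what Corollary~\ref{Coro: ban} records in this setting). The only subtlety worth double-checking is that the equalities above are \emph{isometric}, not merely set-theoretic with a norm inequality: this is why the statement requires the isometric form of Corollary~\ref{Coro: ban}, and why in the hypothesis ``of composition type'' we interpret $\I=\A\circ\Lip_0$ as holding isometrically. Both Proposition~\ref{Prop: ban} and Proposition~\ref{Prop: lip} would also give the inclusions and one-sided norm inequalities directly, so if one preferred, a self-contained chase through linearizations $L_f=\beta_E\circ\widehat{f}$ and the identity $f=L_f\circ\delta_X$ would recover the same conclusion without explicitly invoking the Galois machinery.
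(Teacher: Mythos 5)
Your proof is correct and follows essentially the same route as the paper: the backward implication is immediate because $\I\cap\L$ is a Banach linear operator ideal, and the forward implication is exactly the paper's one-line appeal to Corollary~\ref{Coro: ban} applied to $\bigl((\A\circ\Lip_0)\cap\L\bigr)\circ\Lip_0=\A\circ\Lip_0$, which you merely spell out in more detail.
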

\begin{proof}
  The {\it only if} part is trivial. Now, if $\I=\A\circ\Lip_0$ for some Banach linear operator $\A$, then an application of Corollary \ref{Coro: ban} gives the proof.
\end{proof}

To finish this section, we apply the criterion to show that the Banach Lipschitz operator ideals of $p$-summing operators, $p$-integral operators (both introduced by Farmer and Johnson \cite{FaJo09}) and $p$-nuclear operators (introduced by Chen and Zheng \cite{CheZhe12}) are not of composition type. First, we briefly describe them. Fix $1\leq p <\infty$, a pointed metric space $X$, a Banach space $E$ and a Lipschitz function $f \in \Lip_0(X;E)$.

The function $f$ is a Lipschitz $p$-summing operator if there exists a constant $C>0$ so that for all choices of $(x_i)_{i=1}^{n}$ and $(y_i)_{i=1}^{n}$ in $X$ the inequality
$$
\sum_{i=1}^{n} \norm{f(x_i)-f(y_i)}^p \leq C^{p} \sup_{f^{\#} \in B_{X^\#}} \sum_{i=1}^n|f^{\#}(x_i)-f^{\#}(y_i)|^p
$$
holds. The space of all Lipschitz $p$-summing operators from $X$ to $E$ is denoted by $\Pi_p^L(X;E)$ and the least of such constants $C$ is the $p$-summing norm of $f$ and it is denoted as $\pi_p^{L}(f)$.

The function $f$ is a Lipschitz $p$-integral operator if there are a probability measure space $(\Omega,\Sigma,\mu)$ and two Lipschitz mappings $A\colon L_p(\mu)\rightarrow E''$ and $B\colon X\rightarrow L_{\infty}(\mu)$ such that the following diagram commutes:
\begin{equation}\label{Ec 1}
\xymatrix{
X\ar[d]_B\ar[r]^{f} & E \ar[r]^{J_E} & E'' \\
L_{\infty}(\mu) \ar[rr]^{i_p}& &L_p(\mu)\ar[u]_{A}
}
\end{equation}
where $J_E\colon E\rightarrow E''$ is the canonical evaluation map and $i_p\colon L_{\infty}(\mu)\rightarrow L_p(\mu)$ is the inclusion map. The space of all Lipschitz $p$-integral operators from $X$ to $E$ is denoted by $\I_p^L(X;E)$ and the Lipschitz $p$-integral norm of $f$ is the infimum of $\Lip(A) \Lip(B)$ taken over all factorizations of $f$ as in \eqref{Ec 1} and it is denoted by $\iota_p^{L}(f)$.

Finally, the function $f$ is a Lipschitz $p$-nuclear operator if there are two Lipschitz mappings $B\colon X\rightarrow \ell_{\infty}$ and $A\colon \ell_p\rightarrow E$ and a sequence $\lambda=(\lambda_n)_n \in \ell_p$ such that the following diagram commutes:
\begin{equation}\label{Ec 2}
\xymatrix{
X\ar[d]_B\ar[r]^{f} & E \\
\ell_{\infty}\ar[r]^{M_{\lambda}} & \ell_p\ar[u]_{A}
},
\end{equation}
where $M_{\lambda}$ is the diagonal operator. The space of all Lipschitz $p$-nuclear operators from $X$ to $E$ is denoted by $\N^{L}_p(X;E)$ and the infimum of $\Lip(A) \|\lambda\|_{\ell_p}\Lip(B)$ taken over all the factorization of $f$ as in \eqref{Ec 2} is the $p$-nuclear norm and is denoted by $\nu_p^{L}(f)$.

In \cite{ARSY16} it was shown that all this three spaces, $(\Pi_p^{L};\pi_p^{L}), (\I_p^{L};\iota_p^{L})$ and $(\N_p^{L};\nu_p^{L})$ are Banach Lipschitz operators ideals. Moreover, we have the inclusions $\N_p^{L}\subset \I_p^{L} \subset \Pi_p^{L}$. As it was noticed in \cite[Remark~3.7]{ARSY16}, the Dirac map $\delta_{\mathds R}\colon \mathds R \rightarrow \Ae(\mathds R)$ belongs to $\mathcal N_p^{L}(\mathds R;\Ae(\mathds R))$. Hence it is also Lipschitz $p$-integral and Lipschitz $p$-summing (for this last see also \cite[Remark~3.3]{Saa15}). Besides, regarding the operator $\delta_{\mathds R}$ we have the following proposition.
\begin{proposition}\label{Prop: Delta} Let $\A$ be a Banach operator ideal. Then $\delta_{\mathds R} \in \A\circ \Lip_0(\mathds R;\Ae(\mathds R))$ if and only if  $Id_{L_1} \in \A(L_1;L_1)$.
\end{proposition}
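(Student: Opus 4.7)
The plan is to identify the linearization $L_{\delta_{\mathds R}}$ explicitly and then reduce the statement to the classical isometric identification $\Ae(\mathds R)\cong L_1$.

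First, I would observe that $L_{\delta_{\mathds R}}=Id_{\Ae(\mathds R)}$. Indeed, the linearization is characterized as the unique linear operator $L\in\L(\Ae(\mathds R);\Ae(\mathds R))$ satisfying $L\circ\delta_{\mathds R}=\delta_{\mathds R}$; the identity on $\Ae(\mathds R)$ clearly verifies this equation, so by uniqueness the two operators coincide.

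Second, by the equivalence recalled at the opening of Section~\ref{Sec: Galois} (see \cite[Proposition~3.2]{ARSY16}), $\delta_{\mathds R}\in\A\circ\Lip_0(\mathds R;\Ae(\mathds R))$ if and only if $L_{\delta_{\mathds R}}\in\A(\Ae(\mathds R);\Ae(\mathds R))$. Combined with the previous step, this reduces the statement to the equivalence $Id_{\Ae(\mathds R)}\in\A$ if and only if $Id_{L_1}\in\A$.

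Third, I would invoke the well-known isometric isomorphism $\Ae(\mathds R)\cong L_1$, which follows from the isometric identification $\Lip_0(\mathds R)\cong L_\infty$ obtained by sending $f$ to its (a.e.) derivative $f'$ (see, e.g., Weaver \cite{Wea99}) and then passing to preduals. Once the spaces are identified, the ideal property of $\A$ (sandwich the identity of one space between the isomorphism and its inverse, and conversely) yields $Id_{\Ae(\mathds R)}\in\A$ if and only if $Id_{L_1}\in\A$, which completes the argument.

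The only delicate point is locating a clean reference for the identification $\Ae(\mathds R)\cong L_1$; once that is in hand the argument is essentially a one-line computation of the linearization followed by an invocation of the ideal property of $\A$.
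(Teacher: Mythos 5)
Your proposal is correct and follows essentially the same route as the paper: identify $L_{\delta_{\mathds R}}=Id_{\Ae(\mathds R)}$, use the characterization of $\A\circ\Lip_0$ via linearizations, and invoke the isometric identification $\Ae(\mathds R)\cong L_1$ (which the paper simply cites from the literature rather than deriving from $\Lip_0(\mathds R)\cong L_\infty$). The only difference is that you spell out the uniqueness argument for the linearization and the transfer of the identity through the isomorphism, both of which the paper leaves implicit.
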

\begin{proof}
First recall that the Arens-Ells space of $\mathds R$ is isometrically isomorphic to $L_1$ (see e.g. \cite{God15}). The result follows since the linearization of $\delta_{\mathds R}$ is the identity map of $L_1$.
\end{proof}

\begin{proposition}\label{Prop: No compo}
For $1\leq p <\infty$, the Banach Lipschitz operator ideals $\Pi_p^L$, $\I_p^L$ and $\N_p^L$ are not of composition type.
\end{proposition}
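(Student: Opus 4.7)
The plan is to apply the Criterion (Proposition \ref{Prop: Criterion}): each of $\Pi_p^L$, $\I_p^L$, $\N_p^L$ is of composition type if and only if it equals $(\I \cap \L) \circ \Lip_0$ isometrically, where $\I$ denotes the ideal in question. Hence it suffices to exhibit a Lipschitz operator that lies in the given ideal but not in the corresponding $(\I \cap \L) \circ \Lip_0$.

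The first step is to identify the linear parts of these three ideals, namely
$$
\Pi_p^L \cap \L = \Pi_p, \quad \I_p^L \cap \L = \I_p, \quad \N_p^L \cap \L = \N_p.
$$
For Lipschitz $p$-summing this is essentially \cite[Theorem~2]{FaJo09}. For the $p$-integral and $p$-nuclear cases the identification can be obtained by observing that the defining Lipschitz factorizations of a linear operator $T$ (through $L_\infty(\mu) \to L_p(\mu)$ or $\ell_\infty \to \ell_p$) can be converted, via the barycenter map, into genuinely linear factorizations of the same type, and that $\Lip(\cdot)$ coincides with the operator norm on linear maps.

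The second step is to exhibit the separating operator. Take the Dirac map $\delta_{\mathds R} \colon \mathds R \to \Ae(\mathds R)$. By \cite[Remark~3.7]{ARSY16} it belongs to $\N_p^L(\mathds R;\Ae(\mathds R))$, and hence also to $\I_p^L$ and $\Pi_p^L$ via the chain $\N_p^L \subset \I_p^L \subset \Pi_p^L$. On the other hand, Proposition \ref{Prop: Delta} tells us that $\delta_{\mathds R} \in \A \circ \Lip_0$ if and only if $Id_{L_1} \in \A$. Since $L_1$ is infinite-dimensional, the classical theorem that the identity of an infinite-dimensional Banach space is never absolutely $p$-summing gives $Id_{L_1} \notin \Pi_p$, and therefore, using the inclusions $\N_p \subset \I_p \subset \Pi_p$, also $Id_{L_1} \notin \I_p$ and $Id_{L_1} \notin \N_p$.

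Putting the pieces together: if, for instance, $\Pi_p^L$ were of composition type, the Criterion combined with the first step would give $\Pi_p^L = \Pi_p \circ \Lip_0$, so $\delta_{\mathds R} \in \Pi_p \circ \Lip_0$, forcing $Id_{L_1} \in \Pi_p$, a contradiction. The same argument applied to $\I_p^L$ and $\N_p^L$ (using the corresponding identifications of linear parts) finishes the proof. The main obstacle is the verification of the three identifications of linear parts; everything else is a formal manipulation using the Galois connection and the classical non-$p$-summability of $Id_{L_1}$.
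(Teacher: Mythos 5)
Your overall strategy is exactly the paper's: combine the Criterion with Proposition~\ref{Prop: Delta}, use $\delta_{\mathds R}$ as the separating operator, and derive a contradiction from $Id_{L_1}\notin \Pi_p$. The one genuine gap is in your ``first step'': the identifications $\I_p^L\cap\L=\I_p$ and $\N_p^L\cap\L=\N_p$ are not obtainable by the barycenter manipulation you sketch. The barycenter is a map $\beta_E\colon\Ae(E)\to E$; it cannot turn the Lipschitz maps $B\colon E\to L_\infty(\mu)$ and $A\colon L_p(\mu)\to F''$ appearing in the defining factorization into \emph{linear} maps between the \emph{same} spaces. Linearizing $B$ replaces its domain by $\Ae(E)$, and the outer map $A$, whose domain $L_p(\mu)$ is prescribed by the factorization, is not linearized at all by composing with any barycenter. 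That these identifications are not formal is confirmed by the literature: the $p$-integral case is precisely the content of the discussion around Problem~1 in \cite{FaJo09}, and the $p$-nuclear case is \cite[Theorem~2.1]{CheZhe12}, which holds only under additional hypotheses (separability of the domain, a dual range with the metric approximation property) --- so no space-free formal argument can prove it. The paper simply cites these two results, which is what you should do.

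That said, the gap is easy to close without invoking either nontrivial identification. Since $\N_p^L\subset\I_p^L\subset\Pi_p^L$, if, say, $\I_p^L$ were of composition type, the Criterion and Proposition~\ref{Prop: Delta} would give $Id_{L_1}\in(\I_p^L\cap\L)(L_1;L_1)\subset(\Pi_p^L\cap\L)(L_1;L_1)=\Pi_p(L_1;L_1)$, using only \cite[Theorem~2]{FaJo09} --- the one identification you did justify correctly --- and this already contradicts the fact that the identity of an infinite-dimensional space is never $p$-summing. The same reasoning disposes of $\N_p^L$. With that modification (or with the two citations inserted), your proof is correct and coincides with the paper's.
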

\begin{proof}
First suppose that $\Pi_p^{L}$ is of composition type. Applying Proposition~\ref{Prop: Criterion}, we get $\Pi_p^{L}(\mathds R;\Ae(\mathds R))= (\Pi_p^L\cap\L)\circ \Lip_0 (\mathds R;\Ae(\mathds R))$. Since $\Pi_p^L\cap\L(\Ae(\mathds R);\Ae(\mathds R))=\Pi_p(\Ae(\mathds R);\Ae(\mathds R))$ \cite[Theorem~2]{FaJo09}, we obtain the equality $\Pi_p^{L}(\mathds R;\Ae(\mathds R))=\Pi_p \circ \Lip_0 (\mathds R;\Ae(\mathds R))$. As the Dirac map $\delta_{\mathds R} \in \Pi_p^{L}(\mathds R;\Ae(\mathds R))$, using Proposition~\ref{Prop: Delta}, we obtain that $Id_{L_1}$ is a $p$-summing linear operator, arriving to a contradiction.

The cases of Lipschitz $p$-integral and Lipschitz $p$-nuclear operators follows in the same way
using the fact that $\I_p^{L}\cap \L=\I_p$ (see below \cite[Problem 1]{FaJo09}) and $\N_p^{L}\cap \L(\Ae(\mathds R);\Ae(\mathds R)) =\N_p(\Ae(\mathds R);\Ae(\mathds R))$ \cite[Theorem~2.1]{CheZhe12}.
\end{proof}


\section{Minimal Banach Lipschitz operator ideals}\label{Sec: minimal}

Let us return to our approach of $\LIN$ and $\LIP$ as lattices and their Galois connection.
Following \cite[17.2 and 22.1]{DF93}, for a Banach linear operator $\A$, its minimal kernel $\A^{min}$ and its maximal hull $\A^{max}$ are the {\it smallest} and the {\it biggest} Banach linear operator ideals such that coincide with $\A$ over all finite dimensional Banach spaces. From now on, we denote with $\FIN$ to the class of Banach spaces of finite dimension. In the framework of lattices, the minimal kernel and the maximal hull of $\A$  can be seen as the infimum and the supremum of the sublattice of $\LIN$
$$
\CLIN_{\A}=\big\{\B \in \LIN \colon \B(M;N)=\A(M;N) \ \forall \ M,N \in \FIN\big\}.
$$
That is, $\bigvee \CLIN_{\A}=\A^{max}$ and  $\bigwedge \CLIN_{\A}=\A^{min}$.

The maximal hull of a Banach Lipschitz operator ideal was introduce by Cabrera--Padilla, Ch\'avez--Dominguez, Jimenez--Vargas and  Villegas--Vallecillos  in \cite{CCJV16}. From \cite[Lemma~2.4]{CCJV16}, we can deduce that, for a Banach Lipschitz operator ideal $\I$, its maximal hull, $\I^{max}$, coincide with the supremum of the sublattice of $\LIP$,
$$
\CLIP_{\I}=\big\{\J \in \LIP \colon \J(X_0;N)=\I(X_0;N) \ \forall \ X_0 \in \MFIN, \ N \in \FIN\big\},
$$
where $\MFIN$ stands for the class of all finite pointed metric spaces. In other words we have the equality $\I^{max}=\bigvee \CLIP_{\I}$. In particular, for Banach Lipschitz ideals of composition type we can characterize its maximal hull as follows.

\begin{proposition}\label{prop: Comp Maximal}
Let $\A \in \LIN$. Then
$$
(\A\circ \Lip_0)^{max}=\psi (\bigvee \CLIN_{\A})=\psi(\A^{max})=\A^{max}\circ \Lip_0.
$$
\end{proposition}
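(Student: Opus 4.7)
The rightmost two equalities are immediate from definitions: $\bigvee\CLIN_{\A}=\A^{max}$ by definition of the linear maximal hull, and $\psi(\A^{max})=\A^{max}\circ\Lip_0$ by definition of $\psi$. The substance is therefore the identity $(\A\circ\Lip_0)^{max}=\A^{max}\circ\Lip_0$, which I plan to establish as two inclusions in the lattice $\LIP$. The easy inclusion $\A^{max}\circ\Lip_0\leq(\A\circ\Lip_0)^{max}$ reduces to checking that $\A^{max}\circ\Lip_0$ lies in $\CLIP_{\A\circ\Lip_0}$, since $(\A\circ\Lip_0)^{max}$ is by construction the supremum of that sublattice. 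For any $X_0\in\MFIN$ the Arens--Eells space $\Ae(X_0)$ is finite dimensional (its dimension is $|X_0|-1$), so for every $N\in\FIN$ we have $\A(\Ae(X_0);N)=\A^{max}(\Ae(X_0);N)$ isometrically by maximality of $\A^{max}$; passing through the definition of the composition norm then gives $(\A\circ\Lip_0)(X_0;N)=(\A^{max}\circ\Lip_0)(X_0;N)$ isometrically.

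For the reverse inclusion I take $f\in(\A\circ\Lip_0)^{max}(X;E)$ and aim to show $L_f\in\A^{max}(\Ae(X);E)$, which would immediately give $f=L_f\circ\delta_X\in\A^{max}\circ\Lip_0(X;E)$ with the required norm bound $\|f\|_{\A^{max}\circ\Lip_0}\leq\|f\|_{(\A\circ\Lip_0)^{max}}$. The local characterization of the Lipschitz maximal hull from \cite[Lemma~2.4]{CCJV16} furnishes uniform $\A\circ\Lip_0$-bounds on the restrictions $Q\circ f|_{X_0}$ as $X_0$ ranges over finite pointed metric subspaces of $X$ and $Q$ over appropriate maps from $E$ into finite dimensional targets. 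The bridge to the linear side is the identity $L_{Q\circ f|_{X_0}}=Q\circ L_f|_{\Ae(X_0)}$, which translates the Lipschitz local bounds into uniform $\A$-norm bounds for the restrictions of $L_f$ to subspaces of the special form $\Ae(X_0)\subset\Ae(X)$ with $X_0\subset X$ finite.

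The main obstacle is then to promote these bounds to restrictions of $L_f$ to \emph{arbitrary} finite dimensional subspaces $M\subset\Ae(X)$, as required by the standard local characterization of $\A^{max}$. I plan to close this gap by a density argument: since $\Ae(X)=\overline{\textrm{span}\{\delta_x:x\in X\}}$, for every $\varepsilon>0$ a fixed finite dimensional $M$ admits a $(1+\varepsilon)$-isomorphic copy $\widetilde M$ inside some $\Ae(X_0)$ with $X_0\subset X$ finite, obtained by perturbing a chosen basis of $M$ to vectors in a finitely supported span. Continuity of $\A$-norms under small Banach--Mazur perturbations of the domain (a routine consequence of the ideal property applied on a fixed finite dimensional $M$) then transfers the uniform bound on $\|Q\circ L_f|_{\Ae(X_0)}\|_\A$ to a bound on $\|Q\circ L_f|_M\|_\A$ up to a factor $(1+\varepsilon)$. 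Letting $\varepsilon\to 0$ yields $\|L_f\|_{\A^{max}}\leq\|f\|_{(\A\circ\Lip_0)^{max}}$ and hence the desired isometric inclusion.
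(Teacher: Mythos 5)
Your proposal is correct in substance but follows a genuinely different route from the paper. The paper disposes of the hard direction in one stroke: it cites Saadi's result \cite[Corollary~3.3]{Saa17} that $\A^{max}\circ\Lip_0$ is itself a \emph{maximal} Banach Lipschitz operator ideal, and then only needs your ``easy inclusion'' computation ($\Ae(X_0)\in\FIN$, so $\A^{max}$ and $\A$ agree on $(\Ae(X_0);N)$, hence $\A^{max}\circ\Lip_0\in\CLIP_{\A\circ\Lip_0}$); a maximal element of $\CLIP_{\A\circ\Lip_0}$ is automatically its supremum, i.e.\ the maximal hull. You instead prove the reverse inclusion $(\A\circ\Lip_0)^{max}\leq\A^{max}\circ\Lip_0$ by hand, which in effect reproves the relevant case of Saadi's theorem: you pass from the local characterization of the Lipschitz maximal hull in \cite[Lemma~2.4]{CCJV16} to uniform $\A$-bounds on $Q\circ L_f\circ J_{\Ae(X_0)}$ (using that $\Ae(X_0)$ embeds isometrically in $\Ae(X)$ for $X_0\subset X$ and $L_{f|_{X_0}}=L_f\circ J_{\Ae(X_0)}$), and then upgrade to arbitrary finite dimensional $M\subset\Ae(X)$ by perturbation. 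This buys self-containedness at the cost of more machinery. One caveat: your phrase ``a routine consequence of the ideal property'' undersells the perturbation step. The ideal property alone compares $Q\circ L_f\circ(J\circ V)$ with $Q\circ L_f\circ J$; to compare with $Q\circ L_f\circ i_M$ you must also control $\norm{Q\circ L_f\circ(i_M-V)}_{\A}$, which requires the standard estimate that on an $n$-dimensional domain the nuclear norm (hence every ideal norm) of a perturbation of operator norm $\delta$ is at most $n\delta$ (via an Auerbach basis). With that fix the argument closes and yields the isometric identity, matching the paper's conclusion.
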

\begin{proof}
We have to show that if $\I=\A\circ\Lip_0$, then $\I^{max} = \A^{max}\circ \Lip_0$ holds  isometrically. Indeed, from \cite[Corollary 3.3]{Saa17}, we have that $\A^{max}\circ \Lip_0$ is a maximal Banach Lipschitz operator ideal. The proof follows by showing that for every $X_0\in \MFIN$ and $N \in \FIN$ the isometric equality $\A^{max}\circ \Lip_0(X_0;N)=\A\circ \Lip_0(X_0;N)$ holds.
Let $X_0\in\MFIN$ and $N\in\FIN$. Since $\Ae(X_0)\in \FIN$, then $\A^{max}(\Ae(X_0);N)=\A(\Ae(X_0);N)$ isometrically. An application of \cite[Proposition~3.2]{ARSY16} gives the isometric equality $\A^{max}\circ \Lip_0(X_0;N)=\A\circ \Lip_0(X_0;N)$.
\end{proof}

In order to introduce the minimal kernel of a Banach Lipschitz operator ideal, we need the following result.

\begin{proposition}
  Let $\I \in \LIP$. Then $\CLIP_{\I}$ is a complete sublattice of $\LIP$.
\end{proposition}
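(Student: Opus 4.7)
The plan is to verify, using the explicit descriptions of the lattice operations in $\LIP$ already obtained, that both the supremum and the infimum (taken in $\LIP$) of any non-empty $S\subset\CLIP_{\I}$ stay inside $\CLIP_{\I}$. Non-emptiness is free since $\I\in\CLIP_{\I}$, and $\LIP$ is already known to be complete by Proposition~\ref{Prop: Complete lattice}.

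For the infimum, I would appeal directly to the construction in Lemma~\ref{Lemma: infimum}(b): $\bigwedge S = \bigcap S$. Fix $X_0\in\MFIN$ and $N\in\FIN$. For every $\J\in S$ one has $\J(X_0;N)=\I(X_0;N)$ isometrically, so for $f\in\I(X_0;N)$ the quantity $\sup_{\J\in S}\|f\|_{\J}$ equals $\|f\|_{\I}$, in particular it is finite; hence $f\in\bigcap S(X_0;N)$ with $\|f\|_{\bigcap S}=\|f\|_{\I}$. Conversely, any $f\in\bigcap S(X_0;N)$ belongs to some (in fact every) $\J(X_0;N)=\I(X_0;N)$, which gives $\bigcap S(X_0;N)=\I(X_0;N)$ isometrically, so $\bigwedge S\in\CLIP_{\I}$.

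For the supremum I would use the characterization recalled just before the statement, namely $\I^{max}=\bigvee\CLIP_{\I}$ (extracted from \cite[Lemma~2.4]{CCJV16}), which in particular says $\I^{max}\in\CLIP_{\I}$ and $\J\leq\I^{max}$ for every $\J\in\CLIP_{\I}$. Since $S\subset\CLIP_{\I}$, this gives $\bigvee S\leq\I^{max}$, and obviously $\bigvee S\geq\J$ for every $\J\in S$. Fixing again $X_0\in\MFIN$, $N\in\FIN$ and any $\J\in S$, the chain
\begin{equation*}
\I(X_0;N)=\J(X_0;N)\subset \bigvee S(X_0;N)\subset \I^{max}(X_0;N)=\I(X_0;N)
\end{equation*}
forces set-theoretic equality, and the two-sided norm inequality $\|f\|_{\I^{max}}\leq \|f\|_{\bigvee S}\leq \|f\|_{\J}$, combined with $\|f\|_{\I^{max}}=\|f\|_{\J}=\|f\|_{\I}$, yields $\|f\|_{\bigvee S}=\|f\|_{\I}$. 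Thus $\bigvee S\in\CLIP_{\I}$.

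Once both closure properties are established, the suprema and infima computed inside $\LIP$ automatically serve as the suprema and infima inside $\CLIP_{\I}$, which is exactly what is meant by a complete sublattice. I expect the only subtle point to be the supremum step: one must ensure an upper bound belonging to $\CLIP_{\I}$ is available, and that is precisely the role played by invoking $\I^{max}$ as the top of $\CLIP_{\I}$; without that ingredient one would have to build $\bigvee S$ by hand, which is considerably less transparent than the concrete infimum construction of Lemma~\ref{Lemma: infimum}.
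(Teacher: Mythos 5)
Your argument is correct and rests on the same two key facts as the paper's proof --- that $\I^{max}$ is the top element of $\CLIP_{\I}$ and that the infimum $\bigcap S$ of Lemma~\ref{Lemma: infimum} again agrees isometrically with $\I$ on the components $(X_0;N)$ with $X_0\in\MFIN$, $N\in\FIN$ --- but it concludes differently. The paper stops after these two observations and applies \cite[Theorem~2.31]{DP02} (a poset with a top element in which every non-empty subset has an infimum is a complete lattice), so the suprema of $\CLIP_{\I}$ are produced abstractly as infima of sets of upper bounds; it never checks that the supremum computed in the ambient lattice $\LIP$ lands back in $\CLIP_{\I}$. You do check this, by sandwiching $\bigvee S(X_0;N)$ between $\J(X_0;N)$ and $\I^{max}(X_0;N)$ and squeezing the norms, and that step is sound. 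What this buys you is the literal ``closed under the lattice operations of $\LIP$'' reading of ``complete sublattice'' (for non-empty families), which is marginally stronger than what the paper's argument delivers, at essentially no extra cost. The only loose end is the empty family: $\bigvee\emptyset$ computed in $\LIP$ is the bottom ideal $\N\circ\Lip_0$, which need not belong to $\CLIP_{\I}$, so under the strictest reading of ``complete sublattice'' one should, as the paper implicitly does, interpret the statement as asserting that $\CLIP_{\I}$ is a complete lattice in the induced order --- which is all that is needed later to define $\I^{min}=\bigwedge\CLIP_{\I}$.
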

\begin{proof}
The proof is analogous to that of Proposition~\ref{Prop: Complete lattice}. Note that the top element of $\CLIP_{\I}$ is $\I^{max}$ and that for every subset $S$ of $\CLIP_{\I}$, the Lipschitz operator ideal $\bigcap S$ defined in Lemma~\ref{Lemma: infimum}  satisfies that  $\bigcap S(X_0;N)=\I(X_0;N)$ for every $X_0 \in \MFIN$ and $N \in \FIN$. Then $\bigcap S \in \CLIP_{\I}$ and the proof finish.
\end{proof}

Since $\CLIP_{\I}$ is a complete lattice we can ensure that there exists the bottom element. Now we are ready to introduce the minimal kernel of a Banach Lipschitz operator ideal.
\begin{definition}

Let $\I \in \LIP$. Then, $\I^{min}=\bigwedge \CLIP_{\I}$.
\end{definition}

From the definitions, for a Banach Lipschitz operator ideal $\I$ and for all finite pointed metric space $X_0$ and every finite dimensional space $N$ we have
$$
\I^{min}(X_0;N)=\I(X_0;N)=\I^{max}(X_0;N) \quad isometrically
$$
and, in general
$$
\I^{min}\leq \I \leq \I^{\max}
$$
holds. Moreover, we have
\begin{proposition}\label{prop:maxmin}
Let $\I$ be a Banach Lipschitz operator ideal. Then
$$
(\I^{max})^{min}=\I^{min} \quad \mbox{and} \quad (\I^{min})^{max}=\I^{max} \quad isometrically.
$$
\end{proposition}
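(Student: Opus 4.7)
The plan is to observe that both identities are formal consequences of the fact that the three ideals $\I^{min}$, $\I$, $\I^{max}$ already agree isometrically on pairs $(X_0,N)$ with $X_0\in \MFIN$ and $N\in \FIN$; this makes the sublattices $\CLIP_{\I}$, $\CLIP_{\I^{min}}$ and $\CLIP_{\I^{max}}$ literally the same, and then the claim is immediate from taking $\bigwedge$ and $\bigvee$.

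First I would check the equality of sublattices: $\CLIP_{\I}=\CLIP_{\I^{max}}=\CLIP_{\I^{min}}$. If $\J\in \CLIP_{\I}$ then, by definition of $\CLIP_{\I}$, $\J$ coincides isometrically with $\I$ on every $(X_0,N)\in \MFIN\times\FIN$; since the same is true of $\I^{max}$ (and of $\I^{min}$) by the displayed isometric equality preceding the statement, transitivity gives $\J\in\CLIP_{\I^{max}}\cap \CLIP_{\I^{min}}$. The reverse inclusions are identical.

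Next, using the definitions $\I^{min}=\bigwedge \CLIP_{\I}$ and $\I^{max}=\bigvee \CLIP_{\I}$ together with the sublattice equalities just established,
\begin{equation*}
(\I^{max})^{min}=\bigwedge \CLIP_{\I^{max}}=\bigwedge \CLIP_{\I}=\I^{min},
\qquad
(\I^{min})^{max}=\bigvee \CLIP_{\I^{min}}=\bigvee \CLIP_{\I}=\I^{max}.
\end{equation*}
The equalities are \emph{isometric} because the infimum in $\LIP$ (Lemma~\ref{Lemma: infimum}(b)) is given by the supremum of the norms over the subfamily, and the supremum (in the lattice sense) has the analogous property; in particular, when all elements of $\CLIP_{\I}$ carry the same norm on a component $(X_0,N)\in \MFIN\times \FIN$, so do $\bigwedge \CLIP_{\I}$ and $\bigvee \CLIP_{\I}$.

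The only step that needs a brief justification is the identification of $\CLIP_{\I^{max}}$ with $\CLIP_{\I}$ at the isometric level, which is where one must invoke that the defining coincidence $\J(X_0;N)=\I(X_0;N)$ in $\CLIP_{\I}$ is meant isometrically (as used in Proposition~\ref{prop: Comp Maximal}); once this is clear, no further computation is required and both identities are obtained simultaneously.
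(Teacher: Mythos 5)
Your proof is correct and follows exactly the route the paper intends: the paper states this proposition without proof immediately after displaying that $\I^{min}(X_0;N)=\I(X_0;N)=\I^{max}(X_0;N)$ isometrically on $\MFIN\times\FIN$, and your observation that this forces $\CLIP_{\I}=\CLIP_{\I^{max}}=\CLIP_{\I^{min}}$, whence the claim follows by taking $\bigwedge$ and $\bigvee$, is precisely the implicit argument. Your remark that the equality defining $\CLIP_{\I}$ must be read isometrically is a fair point of care, and once the three sublattices are literally identical the isometry of the conclusion is automatic.
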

Also, note that since $\I^{min}$ is a Banach Lipschitz operator ideal, the finite rank Lipschitz operators belongs to $\I^{min}$. Then, it follows that $\overline{\Lip_{0 \F}}^{\|\cdot\|_{\I^{min}}}\leq \I^{min}$. But, since $\overline{\Lip_{0 \F}}^{\|\cdot\|_{\I^{min}}}(X_0,N)=\I(X_0,N)$ isometrically whenever $X_0$ is a finite pointed metric space and $N$ is a finite dimensional Banach space we have the following result.

\begin{proposition}\label{Imin mas chico} Let $\I$ be a Banach Lipschitz operator ideal. Then
$$
\I^{min}=\overline{\Lip_{0 \F}}^{\|\cdot\|_{\I^{min}}}.
$$
In particular, every Lipschitz operator in $\I^{min}$ is approximable.
\end{proposition}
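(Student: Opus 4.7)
The plan is to use the characterization $\I^{min}=\bigwedge \CLIP_{\I}$ that is built into the definition. The paragraph preceding the statement already supplies one inequality, $\overline{\Lip_{0\F}}^{\|\cdot\|_{\I^{min}}}\leq \I^{min}$, since $\Lip_{0\F}\subset\I^{min}$ (because $\I^{min}$ is a Banach Lipschitz operator ideal) and the $\|\cdot\|_{\I^{min}}$-closure of a subset of $\I^{min}$ stays inside the Banach space $\I^{min}$. To get the reverse inequality I would display $\J:=\overline{\Lip_{0\F}}^{\|\cdot\|_{\I^{min}}}$, endowed with the inherited norm, as a member of $\CLIP_{\I}$; the definition of $\I^{min}$ as the infimum of $\CLIP_{\I}$ then forces $\I^{min}\leq\J$, and the isometric equality follows.

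The first subtask is to verify that $\J$ is a Banach Lipschitz operator ideal. The unit $Id_{\mathds K}$ is finite rank with $\|Id_{\mathds K}\|_{\I^{min}}=1$, linearity passes to closures, and completeness is automatic for a closed subspace of the Banach space $\I^{min}(X;E)$. For the ideal property, take $f\in\J$ with $f_n\in\Lip_{0\F}$ converging to $f$ in $\|\cdot\|_{\I^{min}}$, together with $g\in\Lip_0(Y;X)$ and $S\in\L(E;F)$. Each $S\circ f_n\circ g$ is again a Lipschitz finite rank map, and the ideal-norm estimate in $\I^{min}$ gives $\|S\circ(f-f_n)\circ g\|_{\I^{min}}\leq \Lip(g)\|f-f_n\|_{\I^{min}}\|S\|$, so $S\circ f\circ g$ is a $\|\cdot\|_{\I^{min}}$-limit of Lipschitz finite rank operators, i.e.\ it belongs to $\J$, with the required norm bound.

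Next I would check that $\J\in\CLIP_{\I}$. Fix $X_0\in\MFIN$ and $N\in\FIN$. Since $N$ is finite dimensional, every Lipschitz map into $N$ automatically has finite-dimensional range, hence $\Lip_{0\F}(X_0;N)=\Lip_0(X_0;N)$. Because $X_0$ is finite and $N$ is finite dimensional, $\Lip_0(X_0;N)$ is itself a finite dimensional vector space, and so is already closed under any norm, including $\|\cdot\|_{\I^{min}}$. Consequently $\J(X_0;N)=\Lip_0(X_0;N)=\I^{min}(X_0;N)=\I(X_0;N)$ isometrically, by the very definition of $\I^{min}$. Together with the previous paragraph this proves $\J\in\CLIP_{\I}$, hence $\I^{min}\leq\J$ by minimality, and combined with the opposite inequality this yields the claimed equality.

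For the ``in particular'' assertion, every $f\in\I^{min}=\J$ is a $\|\cdot\|_{\I^{min}}$-limit of Lipschitz finite rank operators; since $\Lip(\cdot)\leq\|\cdot\|_{\I^{min}}$ by axiom (i'), the convergence also holds in the Lipschitz norm, so $f$ is approximable in the sense of \cite{JiSeVi14}. I expect no serious obstacle in this argument; the only mildly delicate point is the ideal-axiom verification for $\J$, which is essentially routine bookkeeping once one observes that the closure is taken inside the Banach space $\I^{min}(X;E)$ and that composition with Lipschitz and linear maps preserves finite rank.
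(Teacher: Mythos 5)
Your proposal is correct and follows essentially the same route as the paper, whose ``proof'' is the paragraph immediately preceding the statement: the inclusion $\overline{\Lip_{0\F}}^{\|\cdot\|_{\I^{min}}}\leq\I^{min}$ comes from finite rank Lipschitz operators lying in every Banach Lipschitz operator ideal, and the reverse inequality from observing that the closure agrees with $\I$ on $\MFIN\times\FIN$, hence belongs to $\CLIP_{\I}$ and dominates the infimum $\I^{min}$. You merely fill in the routine verifications (that the closure is itself a Banach Lipschitz operator ideal, and the isometry on finite components) that the paper leaves implicit.
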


In order to characterize the minimal kernel of a Banach Lipschitz operator ideal of composition type, we first introduce, for a Banach Lipschitz operator ideal $\I$, the sublattice of $\LIP$
$$
\CLIP_{\I^{+}}=\big\{\J \in \LIP \colon \J(X_0;N)\supset\I(X_0;N) \ \forall \ X_0 \in \MFIN, \ N \in \FIN\big\}.
$$

\begin{proposition}\label{prop: Minimal} Let $\I \in \LIP$. Then $\CLIP_{\I^{+}}$ is a complete sublattice. Moreover, we have $\bigwedge \CLIP_{\I^{+}}=\I^{min}$.
\end{proposition}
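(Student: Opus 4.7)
The plan is to handle this in two parts: show that $\CLIP_{\I^{+}}$ is a complete sublattice, then identify its bottom element as $\I^{min}$.

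For the complete-sublattice part, I would follow the same template as for $\CLIP_{\I}$ just above. The top element is $\Lip_0$ itself, which trivially lies in $\CLIP_{\I^{+}}$. For meets, given a non-empty $S \subset \CLIP_{\I^{+}}$, form $\bigcap S$ via Lemma~\ref{Lemma: infimum}. The only nontrivial check is that each $f \in \I(X_0;N)$ (with $X_0 \in \MFIN$, $N \in \FIN$) satisfies $\sup_{\J \in S}\|f\|_{\J} < \infty$: but this follows because $\J \in \CLIP_{\I^{+}}$ forces $\|f\|_{\J} \leq \|f\|_{\I}$ on such finite components, so the sup is bounded by $\|f\|_{\I}$. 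This simultaneously shows $\I(X_0;N) \subset \bigcap S(X_0;N)$ with $\|f\|_{\bigcap S} \leq \|f\|_{\I}$, hence $\bigcap S \in \CLIP_{\I^{+}}$. Having meets and a top element, \cite[Theorem~2.31]{DP02} upgrades this to a complete lattice.

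The identification $\bigwedge \CLIP_{\I^{+}} = \I^{min}$ splits into two directions. One is immediate: $\I^{min} \in \CLIP_{\I} \subset \CLIP_{\I^{+}}$. For the other, I would show that any $\J \in \CLIP_{\I^{+}}$ satisfies $\I^{min} \leq \J$ in $\LIP$. The key trick is to pass to the meet $\J \wedge \I^{max}$ computed in $\LIP$. On any finite component $X_0 \in \MFIN$, $N \in \FIN$, the isometric equality $\I^{max}(X_0;N) = \I(X_0;N)$ combined with the set inclusion $\I(X_0;N) \subset \J(X_0;N)$ forces $(\J \wedge \I^{max})(X_0;N) = \I(X_0;N)$ as sets, while the norm on this meet is $\max(\|\cdot\|_{\J},\|\cdot\|_{\I^{max}}) = \|\cdot\|_{\I}$, since by hypothesis $\|f\|_{\J} \leq \|f\|_{\I} = \|f\|_{\I^{max}}$. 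Thus $\J \wedge \I^{max}$ belongs to $\CLIP_{\I}$, and consequently $\I^{min} = \bigwedge \CLIP_{\I} \leq \J \wedge \I^{max} \leq \J$.

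The main subtlety to keep straight throughout is that the order on $\LIP$ reverses norm comparison: smaller ideals carry larger norms. This is precisely what makes the max-of-norms formula for the meet force $\J \wedge \I^{max}$ to match $\I^{max}$ isometrically on finite components, rather than overshoot it, and it is also what lets the condition $\J \in \CLIP_{\I^{+}}$ (set inclusion together with continuous injection) do double duty. With that orientation fixed, the argument is essentially a lattice-theoretic rerun of the analogous fact in $\LIN$ via the Galois connection established in Section~\ref{Sec: Galois}.
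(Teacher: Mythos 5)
Your proof is correct. The completeness argument and the easy inequality $\bigwedge\CLIP_{\I^{+}}\leq\I^{min}$ (via $\I^{min}\in\CLIP_{\I}\subset\CLIP_{\I^{+}}$) coincide with the paper's. Where you diverge is in the reverse inequality: you show that every $\J\in\CLIP_{\I^{+}}$ dominates $\J\wedge\I^{max}$, place that meet in $\CLIP_{\I}$ by computing it on finite components, and then invoke $\I^{min}=\bigwedge\CLIP_{\I}$. The paper avoids $\I^{max}$ entirely: since $\I\in\CLIP_{\I^{+}}$ one has $\bigwedge\CLIP_{\I^{+}}\leq\I$, and since the meet itself lies in $\CLIP_{\I^{+}}$ (by the completeness just proved) one has the reverse containment on finite components; together these put $\bigwedge\CLIP_{\I^{+}}$ directly into $\CLIP_{\I}$, whence $\I^{min}\leq\bigwedge\CLIP_{\I^{+}}$. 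The paper's route is leaner, working only with the single element $\bigwedge\CLIP_{\I^{+}}$; yours is equally valid and has the small advantage of exhibiting, for each individual $\J\in\CLIP_{\I^{+}}$, an explicit member of $\CLIP_{\I}$ below it. Note that both arguments rest on the same reading of $\CLIP_{\I^{+}}$, namely that the containment $\I(X_0;N)\subset\J(X_0;N)$ is norm-one (so $\|f\|_{\J}\leq\|f\|_{\I}$ on those components): you make this explicit when bounding $\sup_{\J\in S}\|f\|_{\J}$, and the paper needs it tacitly for $\bigcap S$ to contain $\I$ on finite components.
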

\begin{proof}
It is clear that the top element of $\CLIP_{\I^{+}}$ is $\Lip_0$. Also, for a non-empty subset $S$ of $\CLIP_{\I^{+}}$, then the ideal $\bigcap S$ constructed in Lemma~\ref{Lemma: infimum} belongs to $\CLIP_{\I^{+}}$. It follows from \cite[Theorem~2.31]{DP02} that $\CLIP_{\I^{+}}$ is a complete sublattice.

Moreover, since $\CLIP_{\I}\subset \CLIP_{\I^{+}}$, then $\bigwedge\CLIP_{\I^{+}}\leq \bigwedge\CLIP_{\I}=\I^{min}$. On the other hand, as $\bigwedge\CLIP_{\I^{+}} \in\CLIP_{\I^{+}}$, then $\bigwedge\CLIP_{\I^{+}}(X_0,N)\supset\I(X_0,N)$ for all $X_0$ a finite pointed metric space and $N$ a finite dimensional Banach space. Also, note that $\I$ belongs to $\CLIP_{\I^{+}}$, then $\bigwedge\CLIP_{\I^{+}}\leq\I$, implying that $\bigwedge\CLIP_{\I^{+}}\in \CLIP_{\I}$ and, as a consequence, $\I^{min}=\bigwedge\CLIP_{\I}\leq \bigwedge\CLIP_{\I^{+}}$ and the proof finish.
\end{proof}

As a direct consequence of the above, we have
\begin{proposition}\label{prop: inclusion minimal} Let $\I$ and $\J$ be Banach Lipschitz operator ideals such that $\I(X_0,N)\subset \J(X_0,N)$ for all pointed finite metric space $X_0$ and every finite dimensional Banach space $N$. Then $\I^{min}\leq \J^{min}$.
\end{proposition}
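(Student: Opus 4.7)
The plan is to reduce the statement to the characterization of the minimal kernel furnished by Proposition~\ref{prop: Minimal}, namely $\I^{min}=\bigwedge \CLIP_{\I^{+}}$ and $\J^{min}=\bigwedge \CLIP_{\J^{+}}$. Once these identities are in place, the desired inequality $\I^{min}\leq \J^{min}$ in the complete lattice $\LIP$ should follow purely formally from a set-theoretic inclusion between the two families $\CLIP_{\J^{+}}$ and $\CLIP_{\I^{+}}$, together with the elementary fact that in a complete lattice the infimum over a larger family is smaller or equal.

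The only substantive step, then, is to verify that $\CLIP_{\J^{+}}\subseteq \CLIP_{\I^{+}}$. Pick any $\K\in \CLIP_{\J^{+}}$; by definition, $\K(X_0;N)\supset \J(X_0;N)$ for every $X_0\in\MFIN$ and $N\in\FIN$. Chaining this with the hypothesis $\I(X_0;N)\subset \J(X_0;N)$, valid on exactly the same class of $X_0$ and $N$, yields $\K(X_0;N)\supset \I(X_0;N)$ for all such $X_0$ and $N$, which places $\K$ in $\CLIP_{\I^{+}}$. This is the full content of the inclusion.

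To close, I would apply $\bigwedge$ to both sides of $\CLIP_{\J^{+}}\subseteq \CLIP_{\I^{+}}$ in $\LIP$: since $\bigwedge \CLIP_{\I^{+}}$ is a lower bound for $\CLIP_{\I^{+}}$, it is \emph{a fortiori} a lower bound for the smaller family $\CLIP_{\J^{+}}$, and so must be dominated by the greatest lower bound $\bigwedge \CLIP_{\J^{+}}$. Invoking Proposition~\ref{prop: Minimal} to identify both infima, one concludes $\I^{min}\leq \J^{min}$. There is no genuine obstacle here: all the real work has been absorbed into Proposition~\ref{prop: Minimal}, and the present statement is a routine lattice-theoretic consequence of that description together with the hypothesis restricted to $\MFIN\times\FIN$.
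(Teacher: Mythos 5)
Your proof is correct and follows essentially the same route as the paper: establish $\CLIP_{\J^{+}}\subseteq \CLIP_{\I^{+}}$ by chaining the hypothesis with the defining inclusions, pass to infima in the complete lattice, and identify them via Proposition~\ref{prop: Minimal}. The only difference is that you spell out the verification of $\CLIP_{\J^{+}}\subseteq \CLIP_{\I^{+}}$, which the paper states without detail.
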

\begin{proof}
It follows since $\CLIP_{\J^{+}} \leq \CLIP_{\I^{+}}$, then $\bigwedge \CLIP_{\I^{+}}\leq \bigwedge \CLIP_{\J^{+}}$. An application of Proposition~\ref{prop: Minimal} completes the proof.
\end{proof}

Recall that from Theorem~\ref{Teo: Galois} we have that the maps $\psi\colon \LIN \rightarrow \LIP$ and $\phi\colon \LIP\rightarrow\LIN$ defined as
$$
\psi(\A)=\A\circ \Lip_0 \quad \mbox{and} \quad\phi(\I)=\I\cap \L,
$$
set up a Galois connection between $\LIN$ and $\LIP$. As a consequence of \cite[7.27]{DP02} we have that the composition $\phi\circ\psi:\LIN\to\LIN$ is a \textit{closure operator}. A map $c:P\to P$ is a \textit{closure operator} if for all $p$, $q\in P$ satisfies
\begin{enumerate}[\upshape (i)]
\item $p\leq c(p)$
\item if $p\leq q$, then $c(p)\leq c(q)$
\item $c(c(p))=c(p)$.
\end{enumerate}

Also, again by \cite[7.27]{DP02},  $\psi\circ\phi:\LIP^d\to\LIP^d$ is a closure operator where $\LIP^d$ is the dual of $\LIP$, that is the set $\LIP$ endowed with the order given by $\I\leq_d \J$ if and only if $\J\leq \I$. The next result characterize the minimal kernel of a Banach Lipschitz operator ideal of composition type.
\begin{theorem}\label{Teo:min}
Let $\A$ be a Banach operator ideal. Then
$$
(\A\circ \Lip_0)^{min}=\A^{min}\circ \Lip_0 \quad isometrically.
$$
\end{theorem}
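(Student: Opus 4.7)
The plan is to prove the isometric equality by establishing the two lattice inequalities $(\A\circ \Lip_0)^{min}\leq \A^{min}\circ \Lip_0$ and $\A^{min}\circ \Lip_0\leq (\A\circ \Lip_0)^{min}$ separately, combining the linearization isomorphism $(\A\circ \Lip_0)(X;E)\cong \A(\Ae(X);E)$ from \cite{ARSY16} with the Galois connection of Theorem~\ref{Teo: Galois}. For the first inequality I would show that $\A^{min}\circ \Lip_0$ lies in the sublattice $\CLIP_{\A\circ \Lip_0}$; then $(\A\circ \Lip_0)^{min}\leq \A^{min}\circ \Lip_0$ is immediate from $(\A\circ \Lip_0)^{min}=\bigwedge \CLIP_{\A\circ \Lip_0}$. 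The key observation is that whenever $X_0\in\MFIN$ the free space $\Ae(X_0)$ is finite-dimensional, so for every $N\in\FIN$ the defining property of the minimal kernel gives $\A(\Ae(X_0);N)=\A^{min}(\Ae(X_0);N)$ isometrically; the linearization isomorphism translates this into $(\A^{min}\circ \Lip_0)(X_0;N)=(\A\circ \Lip_0)(X_0;N)$ isometrically.

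For the reverse inequality I would exploit the Galois connection: by Theorem~\ref{Teo: Galois}, $\A^{min}\circ \Lip_0\leq (\A\circ \Lip_0)^{min}$ is equivalent to $\A^{min}\leq (\A\circ \Lip_0)^{min}\cap \L$, and since $\A^{min}=\bigwedge \CLIN_\A$ this would follow from verifying that $(\A\circ \Lip_0)^{min}\cap \L \in \CLIN_\A$, i.e., $((\A\circ \Lip_0)^{min}\cap \L)(E;F)=\A(E;F)$ isometrically for all $E,F\in\FIN$. Because finite-dimensional spaces have the isometric Lipschitz-lifting property, Proposition~\ref{Prop:lifting property} combined with the Step~1 inclusion $(\A\circ \Lip_0)^{min}\leq \A\circ \Lip_0$ already delivers the set inclusion $((\A\circ \Lip_0)^{min}\cap \L)(E;F)\subseteq \A(E;F)$ together with the norm bound $\|T\|_\A\leq \|T\|_{(\A\circ \Lip_0)^{min}}$.

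The main obstacle is the matching reverse estimate $\|T\|_{(\A\circ \Lip_0)^{min}}\leq \|T\|_\A$ for every $T\in\A(E;F)$ with $E,F\in\FIN$. Set-theoretic containment of such a $T$ in $(\A\circ \Lip_0)^{min}(E;F)$ is automatic because $T$ is linear finite rank and $\Lip_{0\F}\subseteq (\A\circ \Lip_0)^{min}$. To control the norm I would invoke Proposition~\ref{Imin mas chico} together with the classical description $\A^{min}=\overline{\F}^{\|\cdot\|_{\A^{min}}}$ of Pietsch: approximate $L_T\in \A^{min}(\Ae(E);F)$ in the $\A^{min}$-norm by finite rank operators $T_n$ (so that $\|T_n\|_{\A^{min}}\to \|L_T\|_{\A^{min}}=\|T\|_{\A^{min}}=\|T\|_\A$), form the Lipschitz finite-rank approximants $f_n:=T_n\circ \delta_E\in \Lip_{0\F}$, and show that $(f_n)$ is Cauchy in the $(\A\circ \Lip_0)^{min}$-norm with $\|f_n\|_{(\A\circ \Lip_0)^{min}}$ controlled by $\|T_n\|_{\A^{min}}$; completeness of $(\A\circ \Lip_0)^{min}$ then forces the limit to be $T$ with the desired norm bound. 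The technical heart of this last step---transferring norms from the linear $\A^{min}$-scale on $\Ae(E)\to F$ to the Lipschitz $(\A\circ \Lip_0)^{min}$-scale on finite-rank maps $E\to F$---is what I anticipate to require the most care.
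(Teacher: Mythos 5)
Your first inequality is correct, and it is exactly the argument the paper itself uses for the maximal hull (Proposition~\ref{prop: Comp Maximal}): since $\Ae(X_0)\in\FIN$ whenever $X_0\in\MFIN$, the ideal $\A^{min}\circ\Lip_0$ belongs to $\CLIP_{\A\circ\Lip_0}$, whence $(\A\circ\Lip_0)^{min}\leq\A^{min}\circ\Lip_0$. The gap is in the reverse inequality, precisely at the point you yourself flag as the ``technical heart''. To place $(\A\circ\Lip_0)^{min}\cap\L$ in $\CLIN_{\A}$ you must prove $\|T\|_{(\A\circ\Lip_0)^{min}}\leq\|T\|_{\A}$ for every linear $T$ between finite-dimensional spaces, and your approximation scheme does not deliver this: the inequality needed at the last step, namely $\|T_n\circ\delta_E\|_{(\A\circ\Lip_0)^{min}}\leq\|T_n\|_{\A^{min}}$ for finite-rank $T_n\colon\Ae(E)\to F$, is essentially equivalent to the statement being proved. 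The ideal $(\A\circ\Lip_0)^{min}$ is defined only as a lattice infimum, so its norm is pinned down solely on pairs $(X_0;N)$ with $X_0$ a \emph{finite metric space}; a finite-dimensional Banach space $E$ is not in $\MFIN$, and an arbitrary $\J\in\CLIP_{(\A\circ\Lip_0)^{+}}$ carries no a priori bound on $\|T_n\circ\delta_E\|_{\J}$ in terms of $\|T_n\|_{\A^{min}}$ (recall that $\|\cdot\|_{(\A\circ\Lip_0)^{min}}$ is the supremum of the norms $\|\cdot\|_{\J}$ over that sublattice). Proposition~\ref{Imin mas chico} gives density of $\Lip_{0\F}$ in the $\I^{min}$-norm, but says nothing about how the $\I^{min}$-norm of a finite-rank Lipschitz map is related to an ideal norm of its linearization, which is what your Cauchy-sequence argument would require. (Note also that for $E,F\in\FIN$ the operator $L_T$ already has finite rank, so the approximation is vacuous and the whole weight falls on the missing norm comparison.)

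The paper closes exactly this gap by a different device: using that $\psi\circ\phi$ is a closure operator on $\CLIP_{(\A\circ\Lip_0)^{+}}^{d}$ and that $(\A\circ\Lip_0)^{min}$ is its top (hence closed) element, it first shows that $(\A\circ\Lip_0)^{min}$ is itself of composition type, $(\A\circ\Lip_0)^{min}=(\I\cap\L)\circ\Lip_0$ with $\I=(\A\circ\Lip_0)^{min}$. This supplies the concrete handle your argument lacks: once the ideal is known to be of the form $\B\circ\Lip_0$ for a linear ideal $\B$, its norm is computed through linearizations, and the identification $\B^{min}=\A^{min}$ is then carried out at the level of maximal hulls using Proposition~\ref{prop: Comp Maximal}, Proposition~\ref{Prop:lifting property} and \cite[22.1]{DF93}. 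If you wish to keep your two-inequality structure, you should replace the approximation step by this closure-operator argument, or by some other means of first establishing that $(\A\circ\Lip_0)^{min}$ is of composition type.
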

\begin{proof}

First we show that there exists a Banach linear operator ideal $\B$ such that the equality $(\A\circ \Lip_0)^{min}=\B^{min}\circ \Lip_0$ holds. For this, consider the closure operator
$$
\psi \circ \phi \colon \CLIP_{(\A\circ \Lip_0)^{+}}^{d}\rightarrow \CLIP_{(\A\circ \Lip_0)^{+}}^{d}.
$$
To see that is well defined, take $\I \in \CLIP_{(\A\circ \Lip_0)^{+}}$. Then for  $X_0 \in \MFIN$ and $N \in \FIN$ we have $\I(X_0;N)\supset\A\circ \Lip_0(X_0;N)$, which, by the monotony of $\psi \circ \phi$, implies that $\psi \circ \phi (\I) (X_0;N)\supset \psi \circ \phi(\A\circ \Lip_0)(X_0;N)$. Since $\A\circ \Lip_0=\psi(\A)$, by Corollary~\ref{Coro: ban} we get that $\psi \circ \phi (\I) (X_0;N)\supset \A\circ \Lip_0(X_0;N)$. Then $\psi \circ \phi (\I) \in \CLIP_{(\A\circ \Lip_0)^{+}}$.

As the top element of $\CLIP_{(\A\circ \Lip_0){+}}^{d}$ coincides with the bottom element of $\CLIP_{(\A\circ \Lip_0)^{+}}$ which, by Proposition~\ref{prop: Minimal}, is $(\A\circ \Lip_0)^{min}$, by \cite[7.2]{DP02} $(\A\circ \Lip_0)^{min}$ is a closed element. That is, there exists $\I \in \CLIP_{(\A\circ \Lip_0)^{+}}$ such that $(\A\circ \Lip_0)^{min}=\psi\circ \phi(\I)$, meaning that
$$
(\A\circ \Lip_0)^{min}=(\I\cap \L)\circ \Lip_0 \quad isometrically.
$$
Now, we see that the last equality still valid if we consider the minimal kernel of the Banach linear operator ideal $(\I\cap \L)$ instead of itself. First, note that by the monotony, $(\I\cap \L)^{min}\circ \Lip_0\leq(\I\cap \L)\circ \Lip_0$. On the other hand, since $\Ae(X_0)$ is a finite dimensional space whenever $X_0$ is a finite metric space, then $(\I\cap \L)(\Ae(X_0);N)=(\I\cap \L)^{min}(\Ae(X_0);N)$ for any finite dimensional space $N$. Thus
$$
(\I\cap \L)\circ \Lip_0(X_0;N)=(\I\cap \L)^{min}\circ \Lip_0(X_0;N),
$$
and, as a consequence,  $(\I\cap \L)^{min}\circ \Lip_0\in\CLIP_{(\A\circ\Lip_0)^{+}}$. Since $(\I\cap \L)\circ \Lip_0$ is the bottom element of $\CLIP_{(\A\circ \Lip_0)^{+}}$, then $(\I\cap \L)^{min}\circ \Lip_0=(\I\cap \L)\circ \Lip_0$ isometrically.

It remains to show that $(\I\cap \L)^{min}=\A^{min}$. For this, as a consequence of \cite[Propostion~22.1]{DF93}, is enough to show that $(\I\cap \L)^{max}(M;N)=\A^{max}(M;N)$ for all finite dimensional spaces $M$ and $N$. Since the Banach Lipschitz ideals $(\I\cap \L)^{min}\circ \Lip_0$ and $\A\circ\Lip_0$ belongs to the complete lattice $\CLIP_{(\A\circ \Lip_0)}$, by Proposition~\ref{prop: Comp Maximal} we get the equality $(\I\cap \L)^{max}\circ \Lip_0=\A^{max}\circ\Lip_0$. Thus,
\begin{equation}\label{eq1}
((\I\cap \L)^{max}\circ \Lip_0)\cap \L=(\A^{max}\circ\Lip_0)\cap\L.
\end{equation}
Finally, if $M$ is a finite dimensional space, by  \cite[Theorem~3.1]{GK03} $M$ has the isometric Lipschitz-lifting property. Then combining~\eqref{eq1} and Proposition~\ref{Prop:lifting property} we get that $(\I\cap \L)^{max}(M;N)=\A^{max}(M;N)$ and the proof follows.
\end{proof}

As a direct consequence, we have
\begin{corollary}\label{coro: minimal circ lipo} Let $\A$ a minimal Banach linear operator ideal. Then $\A\circ \Lip_0$ is a minimal Banach Lipschitz operator ideal.
\end{corollary}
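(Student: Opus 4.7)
The plan is to derive this corollary as a near-immediate consequence of the theorem just proved. By Theorem~\ref{Teo:min} applied to $\A$, we have the isometric equality
$$
(\A \circ \Lip_0)^{min} = \A^{min} \circ \Lip_0.
$$
Since the hypothesis is that $\A$ is a minimal Banach linear operator ideal, meaning $\A = \A^{min}$ isometrically, substituting on the right-hand side yields
$$
(\A \circ \Lip_0)^{min} = \A \circ \Lip_0 \quad isometrically,
$$
which, by the analogue of the linear definition of minimality introduced just before the statement (a Banach Lipschitz operator ideal $\I$ is minimal when $\I = \I^{min}$), gives exactly the claim.

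The only subtlety I would flag is making explicit that the notion of a \emph{minimal} Banach Lipschitz operator ideal parallels the linear case: namely, $\I$ being minimal means $\I$ coincides isometrically with its own minimal kernel $\I^{min}$, which was introduced via $\I^{min} = \bigwedge \CLIP_{\I}$ in the previous definition. With this convention in hand, there is no actual calculation left; the corollary is a one-line substitution. I do not see any step that poses a genuine obstacle, since Theorem~\ref{Teo:min} has already absorbed all the work (in particular the use of the isometric Lipschitz-lifting property of finite-dimensional spaces and the closure-operator machinery from the Galois connection).
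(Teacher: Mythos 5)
Your proof is correct and is exactly the argument the paper intends: the corollary is stated there as a direct consequence of Theorem~\ref{Teo:min} with no further details, and your substitution $(\A\circ\Lip_0)^{min}=\A^{min}\circ\Lip_0=\A\circ\Lip_0$ is the whole content. Your remark that minimality of a Banach Lipschitz operator ideal should be read as $\I=\I^{min}$ isometrically is a fair point, since the paper uses this notion without stating it explicitly.
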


We finish with a brief discussion about the minimal kernel of the Lipschitz ideal
of $p$-nuclear and $p$-integral operators which, by Proposition~\ref{Prop: No compo}, are not of composition type. Recall that, for $1\leq p<\infty$, the ideals of $p$-nuclear and $p$-integral linear operators are related as $\N_p=(\I_p)^{min}$. However, the Banach Lipschitz operator ideal $\N_p^{L}$  (and hence $\I_p^{L}$) is not minimal. Indeed, if it were minimal, by Proposition~\ref{Imin mas chico} every Lipschitz $p$-nuclear operator must be approximable. But, by Proposition~\ref{Prop: Delta}, the Dirac map  $\delta\colon \mathds R\rightarrow \Ae(\mathds R)$  is not approximable and is $p$-nuclear (see above Proposition~\ref{Prop: Delta}).

\begin{proposition}\label{Prop: Min max Np}
Let $1\leq p <\infty$, then
$$
(\I_p^{L})^{min}=(\N_p^{L})^{min} \quad \mbox{and} \quad (\I_p^{L})^{max}=(\N_p^{L})^{max} \quad isometrically.
$$
\end{proposition}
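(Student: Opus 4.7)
The objective is to establish the single isometric identification
\[
\I_p^L(X_0;N) \;=\; \N_p^L(X_0;N) \qquad \text{for every } X_0\in\MFIN,\ N\in\FIN.
\]
Both equalities of the proposition then follow at once: the sublattice $\CLIP_\I$ whose bottom is $\I^{min}$ and whose top is $\I^{max}$ depends only on the $\MFIN\times\FIN$-sections of $\I$; thus the displayed equality forces $\CLIP_{\I_p^L}=\CLIP_{\N_p^L}$, and hence the two sublattices have the same infimum and supremum. As a stylistic shortcut, by Proposition~\ref{prop:maxmin} it is in principle enough to prove either the $\min$ or the $\max$ equality, as each determines the other.

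The inclusion $\N_p^L(X_0;N)\subset \I_p^L(X_0;N)$ with $\iota_p^L(f)\leq\nu_p^L(f)$ is the (global) containment $\N_p^L\subset\I_p^L$ already recorded in the excerpt. The substance of the argument is the reverse inclusion. Fix $f\in\I_p^L(X_0;N)$ and $\varepsilon>0$. Since $N\in\FIN$, the canonical inclusion $J_N$ is an isometric isomorphism, so the Lipschitz $p$-integral definition produces a factorization
\[
f \;=\; A\circ i_p\circ B, \qquad B\colon X_0\to L_\infty(\mu),\ A\colon L_p(\mu)\to N, \qquad \Lip(A)\Lip(B)<\iota_p^L(f)+\varepsilon.
\]
Because $X_0$ is finite, $B(X_0)$ is a finite subset of $L_\infty(\mu)$. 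I would approximate the functions $B(x_1),\dots,B(x_n)$ in $L_\infty$-norm by simple functions on a single sufficiently fine measurable partition $\{\Omega_1,\dots,\Omega_m\}$ of $\Omega$, thereby producing a Lipschitz map $\widetilde B\colon X_0\to\ell_\infty^m$ with $\Lip(\widetilde B)\leq \Lip(B)+\varepsilon$. Under the natural identification of simple functions on this partition with $\ell_\infty^m$ and $\ell_p^m$, the inclusion $i_p$ restricts to the diagonal multiplier $M_\lambda\colon\ell_\infty^m\to\ell_p^m$ with $\lambda_j=\mu(\Omega_j)^{1/p}$, so $\|\lambda\|_{\ell_p}=1$. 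Finally, a McShane-type Lipschitz extension of $A$ restricted to the finite set $M_\lambda(\widetilde B(X_0))\subset\ell_p^m\hookrightarrow\ell_p$ yields $\widetilde A\colon\ell_p\to N$ with $\Lip(\widetilde A)\leq\Lip(A)+\varepsilon$, and the diagram
\[
f \;=\; \widetilde A\circ M_\lambda\circ\widetilde B
\]
gives a Lipschitz $p$-nuclear factorization with $\Lip(\widetilde A)\,\|\lambda\|_{\ell_p}\,\Lip(\widetilde B)\leq \iota_p^L(f)+O(\varepsilon)$. Letting $\varepsilon\to 0$ produces $\nu_p^L(f)\leq\iota_p^L(f)$, completing the isometric identification.

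\textbf{Main obstacle.} The delicate point is the tight simultaneous norm control in the discretization step: one must ensure that $\Lip(\widetilde B)$, $\|\lambda\|_{\ell_p}$, and $\Lip(\widetilde A)$ can all be forced arbitrarily close to $\Lip(B)$, $1$, and $\Lip(A)$ respectively by a single choice of partition. A partition chosen fine enough to separate (up to an $L_\infty$-error $\varepsilon$) the supports of the finitely many $B(x_i)$, combined with a routine Lipschitz extension of $A$ off its range, should handle this. Once this is in hand, the conclusion follows immediately from the sublattice description of $\I^{min}$ and $\I^{max}$.
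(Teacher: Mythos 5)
Your reduction is the same as the paper's: everything rests on the single fact that $\I_p^L(X_0;N)=\N_p^L(X_0;N)$ isometrically for every $X_0\in\MFIN$ and $N\in\FIN$, so that $\CLIP_{\I_p^L}=\CLIP_{\N_p^L}$ and the two sublattices share their infimum and supremum. The paper, however, simply quotes this coincidence from \cite[Theorem~4.1]{CheZhe12}, whereas you try to prove it from scratch by discretizing the Lipschitz $p$-integral factorization, and that sketch has two genuine gaps.

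First, the diagram $f=\widetilde A\circ M_\lambda\circ\widetilde B$ does not commute as you set it up: if $\widetilde A$ extends $A$ restricted to the finite set $M_\lambda(\widetilde B(X_0))$, then $\widetilde A(M_\lambda\widetilde B(x_i))=A(s_i)$, where $s_i$ is the simple function approximating $B(x_i)$, and this only approximates $f(x_i)=A(i_pB(x_i))$; you obtain an approximate factorization, not an exact one. This is repairable (prescribe $\widetilde A(M_\lambda\widetilde B(x_i)):=f(x_i)$ and control the Lipschitz constant of this finite assignment via the separation of the $s_i$, or pass to the limit in the complete norm $\nu_p^L$ on the finite-dimensional space $\Lip_0(X_0;N)$), but as written the key identity fails. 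Second, and more seriously for the isometric claim, there is no ``McShane-type'' extension with $\Lip(\widetilde A)\le\Lip(A)+\varepsilon$ for maps into a finite-dimensional Banach space $N$: McShane's theorem is scalar-valued, and extending coordinatewise (say through an Auerbach basis of $N$) multiplies the Lipschitz constant by a factor depending on $\dim N$. Since membership in $\CLIP_{\I}$, and hence the identification of $\I^{min}$ and $\I^{max}$, requires the components over $\MFIN\times\FIN$ to agree isometrically, an estimate $\nu_p^L(f)\le C_N\,\iota_p^L(f)$ does not yield the stated isometric equalities. Unless you produce the extension with constant $1+\varepsilon$, or restructure the argument so that $A$ is never extended (for instance by composing $A$ with the norm-one projection of $\ell_p$ onto $\ell_p^m$ followed by the isometric embedding of $\ell_p^m$ onto the simple functions, and then handling the resulting error term by a limiting argument), the proof of the key identification is incomplete; citing \cite[Theorem~4.1]{CheZhe12}, as the paper does, closes it immediately.
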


\begin{proof}
By \cite[Theorem~4.1]{CheZhe12} we have that $\CLIP_{\I_p^L}=\CLIP_{\N_p^{L}}$. Then, the result follows from the definitions of minimal kernel and maximal hull of a Banach Lipschitz operator ideal.
\end{proof}
As a consequence, we have
\begin{proposition}\label{Prop: Inclusion N_p} Let $1\leq p <\infty$, then
$\N_p\circ \Lip_0 \leq (\N_p^{L})^{min}$ and the inclusion is strict. Moreover, $\N_p\circ \Lip_0 (\mathds R;\Ae(\mathds R))$ and $(\N_p^{L})^{min}(\mathds R,\Ae(\mathds R))$ differ.
\end{proposition}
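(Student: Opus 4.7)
My plan is to proceed in three stages: establish the continuous inclusion, prove strictness globally via a maximal-hull argument, and then localize the separation to the pair $(\mathbb{R};\Ae(\mathbb{R}))$. For the inclusion, I note that $\N_p = \I_p^{\min}$ is a minimal Banach linear operator ideal, so by Corollary~\ref{coro: minimal circ lipo}, $\N_p\circ\Lip_0$ is itself a minimal Banach Lipschitz operator ideal, and thus $(\N_p\circ\Lip_0)^{\min} = \N_p\circ\Lip_0$. Since $\N_p \subset \N_p^L \cap \L$, the ideal property of $\N_p^L$ gives $\N_p\circ\Lip_0 \leq \N_p^L$, and Proposition~\ref{prop: inclusion minimal} then delivers $\N_p\circ\Lip_0 = (\N_p\circ\Lip_0)^{\min} \leq (\N_p^L)^{\min}$.

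To prove strictness, I argue by contradiction via maximal hulls. Suppose $\N_p\circ\Lip_0 = (\N_p^L)^{\min}$ as Banach Lipschitz ideals. By Proposition~\ref{prop: Comp Maximal} and the standard identity $\N_p^{\max}=\I_p$ (from the linear theory), $(\N_p\circ\Lip_0)^{\max} = \I_p\circ\Lip_0$; meanwhile, Proposition~\ref{prop:maxmin} combined with Proposition~\ref{Prop: Min max Np} gives $((\N_p^L)^{\min})^{\max} = (\N_p^L)^{\max} = (\I_p^L)^{\max}$. The assumed equality would therefore force $\I_p\circ\Lip_0 = (\I_p^L)^{\max}$, and since $\I_p\circ\Lip_0 \leq \I_p^L \leq (\I_p^L)^{\max}$, we would conclude $\I_p^L = \I_p\circ\Lip_0$, meaning $\I_p^L$ is of composition type. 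This contradicts Proposition~\ref{Prop: No compo}.

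For the separation at $(\mathbb{R};\Ae(\mathbb{R}))$, I use the isometric identifications $\Ae(\mathbb{R}) \cong L_1$ and $\Lip_0(\mathbb{R};\Ae(\mathbb{R})) \cong \L(L_1;L_1)$ induced by linearization, under which $\N_p\circ\Lip_0(\mathbb{R};\Ae(\mathbb{R}))$ corresponds isometrically to $\N_p(L_1;L_1)$. The goal is to exhibit $f \in (\N_p^L)^{\min}(\mathbb{R};\Ae(\mathbb{R}))$ whose linearization $L_f$ fails to be $p$-nuclear. The Dirac map $\delta_{\mathbb{R}}$ itself is not a witness, since by Proposition~\ref{Imin mas chico} the minimal kernel only contains approximable maps. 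My strategy is to invoke Proposition~\ref{Imin mas chico} to realize $(\N_p^L)^{\min}(\mathbb{R};\Ae(\mathbb{R}))$ as the completion of $\Lip_{0\F}(\mathbb{R};\Ae(\mathbb{R}))$ in its own norm and to construct a sequence $(f_n) \subset \Lip_{0\F}(\mathbb{R};\Ae(\mathbb{R}))$ that is Cauchy for $\|\cdot\|_{(\N_p^L)^{\min}}$ while $\nu_p(L_{f_n})$ is unbounded. The main obstacle is the concrete estimate: on rank-one maps the two norms coincide, so a genuine separation requires exploiting that Lipschitz factorizations through $\ell_\infty \xrightarrow{M_\lambda} \ell_p$ are strictly more flexible than linear ones on higher-rank targets. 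If this direct construction proves too delicate, an alternative is to argue that any isometric equality at this pair propagates to a global equality of the two ideals---via the universal role of $\Ae(\mathbb{R})=L_1$---thus reducing to the contradiction obtained above.
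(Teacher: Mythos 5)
Your first two steps are sound and essentially reproduce the paper's argument: the inclusion $\N_p\circ\Lip_0\leq(\N_p^{L})^{min}$ is obtained exactly as in the paper from Corollary~\ref{coro: minimal circ lipo} and Proposition~\ref{prop: inclusion minimal}, and your maximal-hull contradiction does establish that the two ideals cannot coincide globally. The gap is in your third step, which is the actual content of the ``moreover'' clause: you never prove that the components at $(\mathds{R};\Ae(\mathds{R}))$ differ. What you offer there is a construction you do not carry out (a $\|\cdot\|_{(\N_p^{L})^{min}}$-Cauchy sequence of finite-rank Lipschitz maps with unbounded $\nu_p$ of the linearizations --- you yourself flag ``the concrete estimate'' as the obstacle) followed by an unproved fallback (``any isometric equality at this pair propagates to a global equality''). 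As written this is a plan, not a proof, and the direct construction is likely the hard way around.

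The idea you are missing is that the maximal-hull argument you already deployed for global strictness is precisely what localizes the separation at $(\mathds{R};\Ae(\mathds{R}))$, with the Dirac map as the witness. You dismissed $\delta_{\mathds{R}}$ on the grounds that Proposition~\ref{Imin mas chico} forces every element of $(\N_p^{L})^{min}$ to be approximable; that is correct, but the paper uses $\delta_{\mathds{R}}$ one level up, at the maximal hulls. Assuming $\N_p\circ\Lip_0(\mathds{R};\Ae(\mathds{R}))=(\N_p^{L})^{min}(\mathds{R};\Ae(\mathds{R}))$, the paper invokes $((\N_p^{L})^{min})^{max}=(\N_p^{L})^{max}$ (Propositions~\ref{prop:maxmin} and~\ref{Prop: Min max Np}) and $(\N_p\circ\Lip_0)^{max}=\N_p^{max}\circ\Lip_0=\I_p\circ\Lip_0$ (Proposition~\ref{prop: Comp Maximal}) to deduce $(\N_p^{L})^{max}(\mathds{R};\Ae(\mathds{R}))=\I_p\circ\Lip_0(\mathds{R};\Ae(\mathds{R}))$. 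This is contradicted by the very map you set aside: $\delta_{\mathds{R}}$ is Lipschitz $p$-nuclear, hence belongs to $(\N_p^{L})^{max}(\mathds{R};\Ae(\mathds{R}))$, while its linearization is $Id_{L_1}$, which is not $p$-integral, so $\delta_{\mathds{R}}\notin\I_p\circ\Lip_0(\mathds{R};\Ae(\mathds{R}))$ by Proposition~\ref{Prop: Delta}. In short, the ``propagation'' you vaguely gesture at in your fallback is exactly the intended route, and it is driven by $\delta_{\mathds{R}}$, not by a new finite-rank construction.
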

\begin{proof}
Since $\N_p\circ \Lip_0\leq \N_p^{L}$, combining Proposition~\ref{prop: inclusion minimal} and Corollary~\ref{coro: minimal circ lipo} we have $\N_p\circ \Lip_0 \leq (\N_p^{L})^{min}$. Now, suppose that $\N_p\circ \Lip_0(\mathds{R};\Ae(\mathds{R})) = (\N_p^{L})^{min}(\mathds{R};\Ae(\mathds{R}))$. Since by Propositions~\ref{prop:maxmin} and~\ref{prop: Comp Maximal} we have that $((\N_p^{L})^{min})^{max}=(\N_p^{L})^{max}$ and $(\N_p\circ \Lip_0)^{max}=\I_p\circ \Lip_0$ we get that $(\N_p^{L})^{max}(\mathds{R};\Ae(\mathds{R}))=\I_p\circ \Lip_0(\mathds{R};\Ae(\mathds{R}))$.
This is a contradiction since, as the Dirac map $\delta_{\mathds{R}}\colon \mathds R \rightarrow \Ae(\mathds R)$ is Lipschitz $p$-nuclear, then $\delta_{\mathds{R}}\in(\N_p^L)^{max}(\mathds{R};\Ae(\mathds{R}))$ but  $\delta_{\mathds{R}}\notin\I_p\circ \Lip_0(\mathds{R};\Ae(\mathds{R}))$.
\end{proof}

Although $\N_p\circ \Lip_0$ and $(\N_p^{L})^{min}$ differ, the linear operators which belong to both ideals are, in most of the cases, the same.

\begin{proposition}\label{Prop: Np^min lin} Let $1\leq p <\infty$ and $E,F$ be Banach spaces such that $E$ is
separable and $F$ is a dual space with the metric approximation property. Then $(\N_p^{L})^{min}\cap \L (E;F)=\N_p(E;F)$ isometrically.
\end{proposition}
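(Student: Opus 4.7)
The plan is to prove the isometric equality via the two inclusions separately.

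The inclusion $\N_p(E;F)\subseteq (\N_p^L)^{min}\cap\L(E;F)$ with the norm comparison $\|T\|_{(\N_p^L)^{min}}\le\|T\|_{\N_p}$ is essentially a concatenation of tools already established. Proposition~\ref{Prop: Inclusion N_p} gives $\N_p\circ\Lip_0\le (\N_p^L)^{min}$, hence $(\N_p\circ\Lip_0)\cap\L\le (\N_p^L)^{min}\cap\L$ as Banach linear operator ideals. Since $E$ is separable, it has the isometric Lipschitz-lifting property, so Proposition~\ref{Prop:lifting property} applied to $\A=\N_p$ yields $(\N_p\circ\Lip_0)\cap\L(E;F)=\N_p(E;F)$ isometrically. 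Chaining these two facts gives the desired inclusion with the norm inequality.

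For the reverse inclusion $(\N_p^L)^{min}\cap\L(E;F)\subseteq \N_p(E;F)$ with $\|T\|_{\N_p}\le\|T\|_{(\N_p^L)^{min}}$, I would fix $T\in (\N_p^L)^{min}\cap\L(E;F)$ and exploit the hypotheses on $F$. Since $F=G'$ is a dual space with the metric approximation property, there exists a net $(P_\alpha)_\alpha$ of finite rank linear operators on $F$ with $\|P_\alpha\|\le 1$ converging pointwise to $Id_F$. For each $\alpha$, $P_\alpha T\colon E\to F$ is linear and finite rank, so it belongs to $\N_p(E;F)$; and $P_\alpha T\to T$ pointwise. By Proposition~\ref{Imin mas chico}, $T$ is also the $\|\cdot\|_{(\N_p^L)^{min}}$-limit of finite rank Lipschitz operators $(f_n)_n$; composing with $P_\alpha$ gives $P_\alpha T=\lim_n P_\alpha f_n$ in $(\N_p^L)^{min}$-norm, with $P_\alpha f_n$ Lipschitz finite rank into the finite dimensional subspace $P_\alpha(F)\subset F$.

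The heart of the argument, and what I expect to be the main obstacle, is the uniform bound
\[
\|P_\alpha T\|_{\N_p} \le \|T\|_{(\N_p^L)^{min}} \quad\text{for every }\alpha.
\]
The idea is to establish that on linear operators $R\colon E\to F$ of finite rank, the linear $\N_p$-norm coincides with the Lipschitz $(\N_p^L)^{min}$-ideal norm. One direction is the first inclusion above; for the reverse one factors $R$ through its finite dimensional image $N=R(E)\subset F$ and uses the isometric identification $(\N_p^L)^{min}(X_0;N)=\N_p^L(X_0;N)=\N_p(X_0;N)$ for $X_0\in\MFIN$ and $N\in\FIN$ (the last equality being \cite[Theorem~2.1]{CheZhe12}), combined with the isometric Lipschitz-lifting property of separable $E$ through Proposition~\ref{Prop:lifting property}. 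Applying this to $R=P_\alpha T$ and using the ideal inequality $\|P_\alpha T\|_{(\N_p^L)^{min}}\le\|P_\alpha\|\,\|T\|_{(\N_p^L)^{min}}\le \|T\|_{(\N_p^L)^{min}}$ yields the bound.

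Once the uniform estimate is in hand, the MAP of $F$ is invoked a second time to upgrade the pointwise convergence $P_\alpha T\to T$ to $\|\cdot\|_{\N_p}$-convergence, via the standard fact that MAP of the target space forces $\N_p$-approximability of nuclear operators by finite rank ones with controlled norms. This gives $T\in\N_p(E;F)$ with $\|T\|_{\N_p}\le \|T\|_{(\N_p^L)^{min}}$, and combined with the first step completes the isometric equality. The crux of the proof is that all three hypotheses play a distinct role: separability of $E$ supplies the linear lifting $E\to\Ae(E)$, duality of $F$ matches Lipschitz and linear $p$-nuclearity on finite dimensional targets, and MAP of $F$ drives the final $\N_p$-norm approximation.
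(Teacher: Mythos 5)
Your first inclusion, $\N_p(E;F)\subseteq (\N_p^{L})^{min}\cap\L(E;F)$ with the norm estimate, is correct and is exactly half of the paper's argument. The gap is in the reverse inclusion, precisely at what you yourself flag as the heart of the matter: the uniform bound $\|P_\alpha T\|_{\N_p}\le\|T\|_{(\N_p^{L})^{min}}$. You propose to derive it from the isometric identification $(\N_p^{L})^{min}(X_0;N)=\N_p(X_0;N)$ for $X_0\in\MFIN$ and $N\in\FIN$, ``combined with'' the Lipschitz-lifting property of $E$. But the operator $P_\alpha T$ has domain $E$, an infinite metric space, and agreement of two ideals on the components indexed by $\MFIN\times\FIN$ says nothing about their components on $(E;N)$ --- that is precisely the difference between an ideal and the other members of its lattice $\CLIP_{\I}$, and the paper's own Proposition~\ref{Prop: Inclusion N_p} exhibits two ideals ($\N_p\circ\Lip_0$ and $(\N_p^{L})^{min}$) that agree on $\MFIN\times\FIN$ yet differ already on $(\mathds{R};\Ae(\mathds{R}))$. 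The lifting property does not repair this: it relates linear and Lipschitz structure over $E$, it does not reduce the domain to a finite metric space. To make your estimate work you would need $\N_p^{L}\cap\L(E;N)=\N_p(E;N)$ for separable $E$ and $N\in\FIN$, which is \cite[Theorem~2.1]{CheZhe12} --- and once you invoke that theorem you may as well apply it with target $F$ itself (a dual space with the MAP by hypothesis), which makes the entire $P_\alpha$-approximation scheme unnecessary. Your closing step is also shakier than ``standard'': a uniform $\N_p$-bound on $P_\alpha T$ together with pointwise convergence only yields $T$ in the maximal hull $\N_p^{max}=\I_p$ in general; upgrading to $T\in\N_p$ is again exactly the content of the Chen--Zheng result you are trying to avoid.

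For comparison, the paper's proof is a two-line sandwich: since $(\N_p^{L})^{min}\le\N_p^{L}$ always holds, Proposition~\ref{Prop: Inclusion N_p} gives $\N_p\circ\Lip_0(E;F)\subset(\N_p^{L})^{min}(E;F)\subset\N_p^{L}(E;F)$ with the corresponding norm inequalities; intersecting with $\L$, the left end equals $\N_p(E;F)$ isometrically by Proposition~\ref{Prop:lifting property} (separability of $E$) and the right end equals $\N_p(E;F)$ isometrically by \cite[Theorem~2.1]{CheZhe12} ($F$ a dual space with the MAP), so the middle term is squeezed. You should rework your second inclusion along these lines, using the upper bound $\N_p^{L}$ directly rather than reconstructing it by approximation.
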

\begin{proof}
Take $E$ and $F$ Banach spaces as in the statement. By Proposition~\ref{Prop: Inclusion N_p}, we have the inclusions $\N_p\circ \Lip_0 (E;F)\subset (\N_p^{L})^{min}(E;F)\subset \N_p^{L} (E;F)$. Since by Proposition~\ref{Prop:lifting property} $(\N_p\circ \Lip_0)\cap\L (E;F)=\N_p(E;F)$ and, by \cite[Theorem~2.1]{CheZhe12}, $\N_p^{L}\cap\L (E;F)=\N_p(E;F)$, the result follows.
\end{proof}
As a consequence, we obtain that $(\N_p^L)^{min}$ is a minimal Banach Lipschitz operator ideal which is not of composition type.

\begin{proposition}\label{Prop: Np no compo} Let $1\leq p<\infty$. Then $(\N_p^{L})^{min}$ is not a Banach Lipschitz operator ideal of composition type.
\end{proposition}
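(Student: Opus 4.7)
The plan is to argue by contradiction: if $(\N_p^{L})^{min}$ were of composition type, then the Criterion together with Proposition~\ref{Prop: Np^min lin} would force the underlying linear ideal (after passing to its minimal kernel) to be $\N_p$, yielding $(\N_p^{L})^{min}=\N_p\circ\Lip_0$, which contradicts Proposition~\ref{Prop: Inclusion N_p}.

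Concretely, suppose $(\N_p^{L})^{min}$ is of composition type. Proposition~\ref{Prop: Criterion} then gives the isometric equality
$$(\N_p^{L})^{min}=\big((\N_p^{L})^{min}\cap\L\big)\circ\Lip_0.$$
I would next apply the minimal kernel operation to both sides. The left hand side is unchanged because the minimal kernel operation is idempotent: $\CLIP_{\I^{min}}=\CLIP_{\I}$ (both sublattices are defined by agreement with $\I$ on finite-dimensional components), so $((\N_p^{L})^{min})^{min}=(\N_p^{L})^{min}$. For the right hand side, Theorem~\ref{Teo:min} yields $((\,\cdot\,)\circ\Lip_0)^{min}=(\,\cdot\,)^{min}\circ\Lip_0$. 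Hence
$$(\N_p^{L})^{min}=\big((\N_p^{L})^{min}\cap\L\big)^{min}\circ\Lip_0.$$

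The key step is then to identify $((\N_p^{L})^{min}\cap\L)^{min}$ with $\N_p$. For any pair of finite-dimensional Banach spaces $M$ and $N$, the space $M$ is separable and $N$ is trivially a dual space with the metric approximation property, so Proposition~\ref{Prop: Np^min lin} applies and gives $((\N_p^{L})^{min}\cap\L)(M;N)=\N_p(M;N)$ isometrically. Since the minimal kernel of a Banach linear operator ideal is determined by its finite-dimensional components ($\CLIN_{\A_1}=\CLIN_{\A_2}$ as soon as $\A_1$ and $\A_2$ coincide on $\FIN\times\FIN$), and $\N_p=(\I_p)^{min}$ is already a minimal Banach linear operator ideal, one concludes $((\N_p^{L})^{min}\cap\L)^{min}=\N_p^{min}=\N_p$ isometrically.

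Plugging back in, we obtain $(\N_p^{L})^{min}=\N_p\circ\Lip_0$ isometrically, directly contradicting Proposition~\ref{Prop: Inclusion N_p}, which states that $\N_p\circ\Lip_0(\mathds R;\Ae(\mathds R))$ and $(\N_p^{L})^{min}(\mathds R;\Ae(\mathds R))$ differ. The delicate point in the argument is the simultaneous use of the idempotency of the minimal kernel on $\LIP$ and Theorem~\ref{Teo:min}, which together let us replace the \emph{a priori} unknown ideal $((\N_p^{L})^{min}\cap\L)$ by its minimal kernel without altering the composition; once that is in place, Proposition~\ref{Prop: Np^min lin} restricted to finite-dimensional spaces closes the argument cleanly.
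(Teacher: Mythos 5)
Your proof is correct, and it reaches the same contradiction as the paper but by a more global route. The paper also begins with Proposition~\ref{Prop: Criterion}, but then evaluates the resulting identity only at the single pair $(\mathds R,\Ae(\mathds R))$: since $\Ae(\mathds R)\cong L_1$, Proposition~\ref{Prop: Np^min lin} is invoked to identify $\big((\N_p^{L})^{min}\cap\L\big)(\Ae(\mathds R);\Ae(\mathds R))$ with $\N_p(\Ae(\mathds R);\Ae(\mathds R))$, giving $(\N_p^{L})^{min}(\mathds R;\Ae(\mathds R))=\N_p\circ\Lip_0(\mathds R;\Ae(\mathds R))$ and hence a contradiction with Proposition~\ref{Prop: Inclusion N_p}. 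You instead apply the minimal-kernel operation to the whole identity, using its idempotency on $\LIP$ together with Theorem~\ref{Teo:min}, and then pin down $\big((\N_p^{L})^{min}\cap\L\big)^{min}=\N_p$ from the finite-dimensional components and the minimality of $\N_p=(\I_p)^{min}$; this yields the stronger global conclusion $(\N_p^{L})^{min}=\N_p\circ\Lip_0$ before invoking the same contradiction. The paper's argument is shorter and does not need Theorem~\ref{Teo:min}, but it applies Proposition~\ref{Prop: Np^min lin} with $F=\Ae(\mathds R)\cong L_1$, a space for which the dual-space hypothesis of that proposition is not actually available; your version only ever uses Proposition~\ref{Prop: Np^min lin} for finite-dimensional $M$ and $N$, where the hypotheses hold trivially, so your route is arguably the more robust one. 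All the auxiliary facts you rely on (idempotency of $(\cdot)^{min}$ on $\LIP$, Theorem~\ref{Teo:min} applied to $\A=(\N_p^{L})^{min}\cap\L$, and that ideals agreeing isometrically on $\FIN$ have the same minimal kernel) are available in the paper, so the argument is complete.
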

\begin{proof}  If $(\N_p^{L})^{min}$ were of composition type, then by Proposition~\ref{Prop: Criterion} we have that $(\N_p^{L})^{min}=((\N_p^{L})^{min}\cap \L)\circ \Lip_0$. In particular $(\N_p^{L})^{min}(\mathds R,\Ae(\mathds R))= ((\N_p^{L})^{min}\cap \L)\circ \Lip_0(\mathds R,\Ae(\mathds R))$. Now, as $\Ae(\mathds R)=L_1$, by the above proposition we get that $\N_p\circ \Lip_0 (\mathds R;\Ae(\mathds R))=(\N_p^{L})^{min}(\mathds R,\Ae(\mathds R))$, which contradict Proposition~\ref{Prop: Np^min lin}. The proof finish.
\end{proof}

As a consequence of Proposition~\ref{Prop: Np no compo}, every Banach Lipschitz ideal belonging to the sublattice $\CLIP_{\N_p^L}$ must be not of composition type. Indeed, if $\A\circ\Lip_0\in\CLIP_{\N_p^L}$, then $\CLIP_{\N_p^L}=\CLIP_{\A\circ\Lip_0}$ which implies that $\A^{min}\circ\Lip_0=(\N_P^L)^{min}$. In general, we have.

\begin{proposition}
  Let $\I$ be a Banach Lipschitz operator ideal such that $\I^{min}$ or $\I^{max}$ is not of composition type. Then, every Banach Lipschitx operator ideal $\J\in\CLIP_{\I}$ is not of composition type. In particular, $\I$ is not of composition type.
\end{proposition}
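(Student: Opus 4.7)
The plan is to argue by contrapositive: assume some $\J\in\CLIP_{\I}$ is of composition type, write $\J=\A\circ\Lip_0$ for a Banach linear operator ideal $\A$, and then derive that both $\I^{min}$ and $\I^{max}$ must themselves be of composition type, contradicting the hypothesis.

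The first step is a bookkeeping observation: membership in $\CLIP_{\I}$ depends on $\I$ only through the values $\I(X_0;N)$ for $X_0\in\MFIN$ and $N\in\FIN$. Hence, if $\J\in\CLIP_{\I}$, then a Banach Lipschitz operator ideal $\mathcal{K}$ agrees with $\I$ on $\MFIN\times\FIN$ if and only if it agrees with $\J$ there, so $\CLIP_{\I}=\CLIP_{\J}$ as sublattices of $\LIP$. Taking supremum and infimum, the definitions of maximal hull and minimal kernel therefore give the isometric identities $\J^{max}=\I^{max}$ and $\J^{min}=\I^{min}$.

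With $\J=\A\circ\Lip_0$, Proposition~\ref{prop: Comp Maximal} yields $\J^{max}=\A^{max}\circ\Lip_0$, while Theorem~\ref{Teo:min} yields $\J^{min}=\A^{min}\circ\Lip_0$; each is by construction of composition type. Combining with the previous step, $\I^{max}=\A^{max}\circ\Lip_0$ and $\I^{min}=\A^{min}\circ\Lip_0$, contradicting the standing assumption that at least one of $\I^{max}$, $\I^{min}$ is not of composition type. Therefore no $\J\in\CLIP_{\I}$ can be of composition type. For the \emph{in particular} clause, observe that $\I\in\CLIP_{\I}$ trivially, so the main claim applied to $\J=\I$ gives the result.

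Since every ingredient has already been established earlier in the section, I do not expect a genuine obstacle; the whole argument is a short composition of Proposition~\ref{prop: Comp Maximal}, Theorem~\ref{Teo:min}, and the observation that the sublattice $\CLIP_{\I}$ (and hence the operations $(\cdot)^{max}$ and $(\cdot)^{min}$) is invariant along $\CLIP_{\I}$ itself.
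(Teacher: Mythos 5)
Your argument is correct and matches the one the paper intends: the proposition is stated without a formal proof, but the paragraph immediately preceding it sketches exactly your reasoning in the special case $\I=\N_p^L$ (if $\A\circ\Lip_0\in\CLIP_{\I}$ then $\CLIP_{\I}=\CLIP_{\A\circ\Lip_0}$, whence $\I^{min}=\A^{min}\circ\Lip_0$ and $\I^{max}=\A^{max}\circ\Lip_0$ via Theorem~\ref{Teo:min} and Proposition~\ref{prop: Comp Maximal}). Your write-up is just the clean general version of that, including the correct handling of the \emph{in particular} clause via $\I\in\CLIP_{\I}$.
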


We do not know if the converse of the above proposition holds.

\begin{problem} Let $\I$ be a Banach Lipschitz operator ideal. Suppose that $\I^{min}$ or $\I^{max}$ is of composition type. Does $\I$ must be of composition type?
\end{problem}

\end{document}